\numberwithin{equation}{section}
\newtheorem{theorem}{Theorem}[section]
\newtheorem{lemma}[theorem]{Lemma}
\newtheorem{corollary}[theorem]{Corollary}
\theoremstyle{remark}
\newtheorem{definition}[theorem]{Definition}
\newtheorem{remark}[theorem]{Remark}
\DeclareMathOperator{\dif}{\mathrm{d}\!}
\renewcommand{\k}{k}
\newcommand{\m}{m}
\newcommand{\n}{n}
\renewcommand{\u}{u}
\renewcommand{\v}{v}
\newcommand{\x}{x}
\newcommand{\y}{y}
\newcommand{\z}{z}
\newcommand{\1}{\bar 1}
\newcommand{\E}{{\mathbf{E}}}
\renewcommand{\P}{\mathbf{P}}
\newcommand{\Prob}[1]{\mathbf{P}\left\{#1\right\}}
\newcommand{\R}{{\mathbb{R}}}
\newcommand{\Sphere}[1][d-1]{{\mathbb{S}}^{#1}}
\newcommand{\NN}{{\mathbb{N}}}
\newcommand{\Rp}[1][d]{\R_{\scriptscriptstyle ++}^{#1}}
\newcommand{\sM}{{\mathcal{M}}}
\newcommand{\sD}{{\mathcal{D}}}
\newcommand{\sH}{{\mathcal{H}}}
\newcommand{\sO}{{\mathcal{O}}}
\newcommand{\ssO}{{\scriptstyle\sO}}
\newcommand{\eps}{\varepsilon}
\renewcommand{\kappa}{\varkappa}
\newcommand{\cone}{\mathbb{T}}
\newcommand{\econe}{\mathbb{\widehat T}}
\newcommand{\kcone}{{\mathbb T_K}}
\newcommand{\ekcone}{\cone}
\newcommand{\cecone}{\econe^c}
\newcommand{\rhoH}{\rho_{\mathrm{H}}}
\newcommand{\fnc}{\kappa}
\DeclareMathOperator{\card}{card}
\newlength{\querylen}
\begin{document}

\title{Limit theorems for multidimensional renewal sets}
	
\author{Andrii Ilienko and Ilya Molchanov\\
{\small\it Igor Sikorsky National Technical University of Ukraine (KPI), 
  ilienko@matan.kpi.ua}\\
{\small\it University of Bern, ilya@stat.unibe.ch}}

\date{\today}

\maketitle

\begin{abstract}
  Consider multiple sums $S_n$ of i.i.d.~random variables with a
  positive expectation on the $d$-dimensional integer grid. We prove
  the strong law of large numbers, the law of the iterated logarithm
  and the distributional limit theorem for random sets $\sM_t$ that
  appear as inversion of the multiple sums, that is, as the set of all
  arguments $x\in\R_+^d$ such that the interpolated multiple sum $S_x$
  exceeds $t$. The moment conditions are identical to those imposed in
  the limit theorems for multiple sums. The results are
  expressed in terms of set inclusions and using distances between
  sets.
\end{abstract}

\section{Introduction}

Classical renewal theorems can be viewed as inverse results to
limit theorems for sums of i.i.d.~random
variables.  In this paper we consider similar results for multiple
sums $S_n$ on the $d$-dimensional grid $\NN^d$.  Unless otherwise
noted, assume $d\ge2$.

The letters $\m$, $\n$, $\k$, and
$\u,x,y,\z$ stand for vectors from $\NN^d$ or $\R_+^d=[0,\infty)^d$,
or of spaces of other dimensions. Their components are
denoted by the respective superscripted letters,
e.g., $\m=(m^1,\dots,m^d)$. Denote $\1=(1,\dots,1)$.

We will also make use of the standard componentwise partial order
with $\m\le\n$ meaning that $m^i\le n^i$ for all $i=1,\dots,d$, 
denote
\begin{displaymath}
  |\m|=m^1\cdots m^d 
\end{displaymath}
and write $\m\to\infty$ if $\max\{m^1,\dots,m^d\}\to\infty$. For
$m\in\NN^d$ this is the case if and only if $|\m|\to\infty$, while the
condition $\y\to\infty$ does not imply $|\y|\to\infty$ for
non-integer $\y\in\R_+^d$.

Let $\{\xi_\m$, $\m\in\NN^d\}$, be a multi-indexed family of independent
copies of an integrable random variable $\xi$ with finite mean
$\mu=\E\xi>0$. Denote by 
\begin{displaymath}
  S_\n=\sum_{\m\le\n}\xi_\m,\quad \n\in\NN^d,
\end{displaymath}
the corresponding multiple sums, and let $S_\n=0$
for $n$ with at least one vanishing component. It is convenient to
extend these multiple sums to all indices $\x\in\R_+^d$ by the \emph{piecewise
multi-linear interpolation}, see, e.g., \cite{Weiser}. Let
\begin{equation}
  \label{int}
  S_\x=\sum_{\k\in C_\x}v_\k(\x)\,S_{\k^\ast}, \quad x\in\R_+^d,
\end{equation}
where $C_\x$ denotes the set of all vertices of the unit cube 
which contains $x$, $v_\k(\x)$ is the volume of the box with $\k$ and
$\x$ being diagonally opposite vertices and with faces parallel to the
coordinate planes, and $\k^\ast$ means the vertex opposite to $\k$ in
the cube that contains $x$. 
It is easily seen that (\ref{int}) determines
$S_\x$ uniquely even if $\x$ lies on the boundaries of several
adjacent cubes. This interpolation technique, expressed in another
way, was used by Bass and Pyke \cite{bas:pyk84f}. A special feature of
this choice of interpolation is that $|x|=x^1\cdots x^d$ (being a
multilinear function in all coordinates) admits the exact
interpolation.

Consider the \emph{renewal sets} 
\begin{displaymath}
  \sM_t=\{\x\in\R_+^d:\; S_\x\ge t\},\quad t>0.
\end{displaymath}
Since the multi-linear interpolation \eqref{int} produces a continuous
function, $\sM_t$ is a random closed set in $\R^d$, see \cite{mo1}.

The strong laws of large numbers (SLLNs) for multiple sums were established in \cite{smy73} and
\cite{Gut}. Unlike the conventional case of $d=1$, they hold if and only
if the generic summand has a logarithmic moment whose order depends on
the dimension, see \eqref{SLLN}. By inverting this and other SLLNs for
multiple sums, we show that the rescaled random sets $t^{-1/d}\sM_t$ converge
as $t\to\infty$ (in a sense to be specified) to the set
\begin{equation}
  \label{H}
  \sH=\left\{\x\in\R_+^d:\; |\x|\ge\mu^{-1}\right\}.
\end{equation}

The law of the iterated logarithm (LIL) for renewal sets deals with
modifications of $\sH$ obtained by perturbing $\mu^{-1}$ with an
iterated logarithm term multiplied by a constant. We examine the
values of the constant that ensure the validity of the LIL and show
that the boundary values violate it. We also derive the LILs for
distances between the scaled $\sM_t$ and $\sH$. While the upper limits
are non-trivial, it is shown that the lower limits vanish. The latter
is rather suprising meaning that, inside any cone, the boundary of $t^{-1/d}\sM_t$
infinitely often lies within a small envelope around the boundary of
$\sH$. The proof relies on considering the LILs for multiple sums
inside a cone, outside it and subtle results concerning the LIL for
subsequences. Finally, we establish the central limit theorem for
radial functions that represent $\sM_t$ in the spherical coordinates.

The longer proofs of the SLLN and the LIL are postponed to separate
sections. Special features of the one-dimensional case are considered
in Section~\ref{sec:one-dimensional-case}.  In Appendix, we derive a
strong law of large numbers and a law of the iterated logarithm for
multi-dimensional sums $S_\n$ as $\n\to\infty$ within a sector. These
results differ from those available in the literature so far and
complement the sectorial laws proved in \cite{GutSect}.

Similar results hold for sums generated by marked Poisson point
processes, where $S_x$ is the sum of the marks for the points
dominated by $x\in\R_+^d$. 

Throughout the paper, $\log c$ and $\log\log c$ for $c\ge0$ have the
usual meanings except near zero; we set $\log c$, resp.~$\log\log c$,
to be $1$ over $[0,e)$, resp.~$[0,e^e)$. The extended logarithmic
functions become positive and monotone on $\R_+$.

\section{Strong law of large numbers}
\label{sec:strong-law-large}

We start with a rather general multidimensional inversion theorem
which allows converting a.s.\ limit theorems for $S_\n$ to their
counterparts for $\sM_t$ in terms of set inclusions. We will need the
following generalisation of the regular variation property, which is
due to Avacumovi\'c \cite{Avac}, see also \cite{Alj,BuldBook}
and references therein.

\begin{definition}
  A function $p:[0,\infty)\mapsto[0,\infty)$ which is positive for all
  sufficiently large arguments is said to be $\sO$-regularly
  varying if, for all $c>0$,
  \begin{displaymath}
    \limsup_{t\to\infty}\frac{p(ct)}{p(t)}<\infty.
  \end{displaymath}
\end{definition}

The class of $\sO$-regularly varying functions includes all regularly
varying functions and many oscillating ones.  The substitution $c\to
c^{-1}$ leads to an equivalent characterisation:
\begin{equation}
  \label{ORV_inf}
  \liminf_{t\to\infty}\frac{p(ct)}{p(t)}>0.
\end{equation}

For $c\in\R$, denote
\begin{displaymath}
  \sH(c)=\left\{\x\in\R_+^d:\; |\x|\ge\mu^{-1}+c\right\}.
\end{displaymath}
Then $\sH(c)$ decreases in $c$, and $\sH(0)$ becomes $\sH$ from \eqref{H}. 

\begin{theorem}[Multidimensional inversion]
  \label{inv_th}
  Let $p$ be an $\sO$-regularly varying function such that $p(t)$ is
  non-decreasing and $t^{-1}p(t)$ is non-increasing for all sufficiently
  large $t$.  If
  \begin{equation}
    \label{hyp}
    S_\n-\mu|\n|=\ssO(p(|\n|))\quad \text{a.s.~as }\n\to\infty,
  \end{equation}
  then, for all $\eps>0$ and sufficiently large $t$, 
  \begin{equation}
    \label{incl}
    \sH(\eps p(t)t^{-1})\subset t^{-1/d}\sM_t
    \subset \sH(-\eps p(t)t^{-1})\quad\text{a.s.}
  \end{equation}
\end{theorem}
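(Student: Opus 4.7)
The strategy is to first upgrade \eqref{hyp} from integer indices to the interpolated sum $S_\x$ for $\x\in\R_+^d$, and then translate the resulting uniform asymptotic into the two set inclusions by direct algebra, splitting by whether $|\x|$ is bounded (where $\sO$-regular variation of $p$ takes over) or large (where the non-increasing property of $t^{-1}p(t)$ takes over). The first step uses the exact interpolation of $|\cdot|$ in \eqref{int}, yielding
\begin{displaymath}
  S_\y-\mu|\y|=\sum_{\k\in C_\y}v_\k(\y)\bigl(S_{\k^\ast}-\mu|\k^\ast|\bigr);
\end{displaymath}
vertices $\k^\ast$ with a vanishing coordinate contribute $0$ because both $S_{\k^\ast}$ and $|\k^\ast|$ vanish. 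By \eqref{hyp} and the uniformity of the $\ssO$, for every $\delta>0$ there is a.s.\ a random $N_\delta$ with $|S_\n-\mu|\n||\leq\delta p(|\n|)$ whenever $|\n|\geq N_\delta$. Combined with $p$ non-decreasing, this bounds $|S_\y-\mu|\y||$ by $\delta\,p(M(\y))$ times the total weight on nontrivial vertices, where $M(\y)$ is the largest $|\k^\ast|$ among these.

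Fix $\eps>0$ and a constant $K>2\mu^{-1}$. For the left inclusion, take $\x\in\sH(\eps p(t)t^{-1})$ and set $\y=t^{1/d}\x$, so that $|\y|=t|\x|$ and $\mu|\y|\geq t+\mu\eps p(t)$; then $S_\y\geq t$ follows once $\mu|\y|-S_\y\leq\mu\eps p(t)$. In the bounded regime $|\x|\leq K$, $|\y|\leq Kt$ and $\sO$-regular variation yields $p(M(\y))\leq Cp(t)$, so choosing $\delta<\mu\eps/C$ closes the argument. In the unbounded regime $|\x|>K$, $|\y|\geq t$ and the non-increasing property of $t^{-1}p(t)$ yields $p(|\y|)\leq|\x|p(t)$, hence
\begin{displaymath}
  S_\y\geq\mu|\y|-\delta|\x|p(t)=|\x|(\mu t-\delta p(t))\geq K\mu t/2\geq t,
\end{displaymath}
provided $\delta p(t)/t\leq\mu/2$; this holds for small $\delta$ since $t^{-1}p(t)$ is bounded above for large $t$. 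The right inclusion is symmetric: for $\x$ with $S_\y\geq t$, either $|\x|\geq\mu^{-1}$ (making the required inequality trivial) or $|\x|<\mu^{-1}$ places us in the bounded regime, and the same $\sO$-regular variation estimate yields $t-\mu|\y|\leq\mu\eps p(t)$, i.e., $|\x|\geq\mu^{-1}-\eps p(t)t^{-1}$.

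The main technical obstacle is controlling $M(\y)$ relative to $|\y|$ and $t$. When $\y\geq\1$ one has $M(\y)\leq 2^d|\y|$ and the estimates are immediate. When $\y$ has components below $1$ (possible near the coordinate hyperplanes), $M(\y)=\prod_{y^i\geq 1}\lceil y^i\rceil$ may exceed $|\y|$, but the total weight on nontrivial vertices is precisely $\prod_{y^i<1}y^i\leq 1$; combining the sharper bound $|S_\y-\mu|\y||\leq\delta p(M(\y))\prod_{y^i<1}y^i$ with $p(M(\y))\leq M(\y)p(t)/t$ (valid once $M(\y)\geq t$) produces the clean estimate $|S_\y-\mu|\y||\leq 2^d\delta|\x|p(t)$, which absorbs the boundary inflation and reduces this case to the two-regime analysis above.
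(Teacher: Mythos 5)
Your argument is correct, and it takes a genuinely different route from the paper's. The paper proves both inclusions by contradiction along sequences $\{\x_i\}$, $\{t_i\}$: it first transfers \eqref{hyp} to the interpolated sums via the inequality $|S_\x-\mu|\x||/p(|\x|)\le\sum_{\k}|S_{\k^\ast}-\mu|\k^\ast||/p(|\k^\ast|)$ (a two-step normalisation exploiting that $p$ is non-decreasing and $p(t)/t$ non-increasing, which handles the near-hyperplane degeneracy automatically), and then derives a contradiction between $\alpha_i=(S_{\y_i}-\mu|\y_i|)/p(|\y_i|)\to0$ and a lower bound kept away from zero by the $\sO$-regular variation; for the right-hand inclusion it additionally needs the interpolated SLLN \eqref{hyp_x2} to get $\mu|\y_i|/S_{\y_i}\to1$ in \eqref{KSLLN}, plus the monotonicity of $p(t)-\delta t$ to invert the defining inequality. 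You instead argue directly, with a cruder transfer bound $\delta\,p(M(\y))$ times the nontrivial-vertex weight; the price is that you must control $M(\y)$ against $|\y|$ by hand, and you correctly identify and resolve the only delicate point --- points $\y$ near a coordinate hyperplane, where $M(\y)\gg|\y|$ --- by pairing the inflation of $M(\y)$ with the compensating weight $\prod_{y^i<1}y^i$ and the monotonicity of $p(s)/s$. Your case split for the right-hand inclusion ($|\x|\ge\mu^{-1}$ trivial, otherwise $|\y|<\mu^{-1}t$ puts you in the bounded regime) is a genuine simplification: it bypasses the paper's detour through \eqref{hyp_x2}, \eqref{KSLLN} and the inversion step entirely. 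What the paper's route buys in exchange is modularity --- its interpolation lemma is reused verbatim for the LIL with $\chi$ in place of $p$ via concavity --- whereas your estimate is more elementary and makes the dependence on $\eps$ and $\delta$ explicit. One point you leave implicit but should state: the bound $|S_{\k^\ast}-\mu|\k^\ast||\le\delta p(|\k^\ast|)$ requires every nontrivial vertex to satisfy $|\k^\ast|\ge N_\delta$, which holds uniformly because $|\y|\ge\mu^{-1}t$ forces $\max_i y^i\ge(\mu^{-1}t)^{1/d}\to\infty$ in the left inclusion, while in the right inclusion $S_\y\ge t$ excludes $\y$ from any fixed bounded region once $t$ exceeds the (finite, $\omega$-dependent) supremum of the continuous field $S$ there.
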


Theorem~\ref{inv_th} yields the following Marcinkiewicz--Zygmund type
SLLN for $\sM_t$ in terms of set inclusions.

\begin{corollary}[SLLN for renewal sets, set-inclusion version]
  \label{set_SLLN_cor}
  If
  \begin{equation}
    \label{SLLN}
    \E(|\xi|^{\beta}\log^{d-1}|\xi|)<\infty
  \end{equation}
  for some $\beta\in[1,2)$, then, for each $\eps>0$ and all
  sufficiently large $t$,  
  \begin{displaymath}
    \sH(\eps t^{-1+1/\beta})\subset t^{-1/d}\sM_t
    \subset \sH(-\eps t^{-1+1/\beta})\quad\text{a.s.}
  \end{displaymath}
\end{corollary}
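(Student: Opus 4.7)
The plan is to deduce the corollary as a direct application of Theorem~\ref{inv_th} with the explicit choice $p(t)=t^{1/\beta}$. The first step is to invoke the Marcinkiewicz--Zygmund type strong law of large numbers for multiple sums due to Smythe and Gut \cite{smy73,Gut}, which asserts that, for $\beta\in[1,2)$, the moment condition \eqref{SLLN} is equivalent to
\begin{displaymath}
  S_\n-\mu|\n|=\ssO(|\n|^{1/\beta})\quad\text{a.s.\ as }\n\to\infty.
\end{displaymath}
This is precisely the hypothesis \eqref{hyp} of Theorem~\ref{inv_th} with $p(t)=t^{1/\beta}$.

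It remains to verify that $p(t)=t^{1/\beta}$ meets the regularity requirements of Theorem~\ref{inv_th}. The function $t\mapsto t^{1/\beta}$ is regularly varying of index $1/\beta>0$, hence in particular $\sO$-regularly varying, and it is strictly increasing. Since $\beta\ge1$, the exponent in $t^{-1}p(t)=t^{1/\beta-1}$ is non-positive, so $t^{-1}p(t)$ is non-increasing. All structural assumptions on $p$ in Theorem~\ref{inv_th} are therefore fulfilled.

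Applying Theorem~\ref{inv_th} then yields, for every $\eps>0$ and all sufficiently large $t$,
\begin{displaymath}
  \sH(\eps p(t)t^{-1})\subset t^{-1/d}\sM_t\subset \sH(-\eps p(t)t^{-1})\quad\text{a.s.}
\end{displaymath}
Substituting $p(t)t^{-1}=t^{1/\beta-1}=t^{-1+1/\beta}$ gives exactly the claimed inclusions, and the corollary follows. No genuine obstacle is anticipated here: the entire content of the corollary is packaged into the cited Marcinkiewicz--Zygmund SLLN for multiple sums (whose proof is the nontrivial input, but which is imported as a black box) together with a routine verification of the monotonicity and $\sO$-regular variation of the power function $t^{1/\beta}$.
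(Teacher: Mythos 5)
Your proof is correct and follows essentially the same route as the paper: invoke Gut's Marcinkiewicz--Zygmund SLLN for multi-indexed sums to obtain hypothesis \eqref{hyp} with $p(t)=t^{1/\beta}$, check that this $p$ satisfies the monotonicity and $\sO$-regular variation requirements, and apply Theorem~\ref{inv_th}. The only point you gloss over is that Gut's theorem is stated for $\n\to\infty$ in the sense $\min\{n^1,\dots,n^d\}\to\infty$, whereas \eqref{hyp} requires the (stronger) statement for $\max\{n^1,\dots,n^d\}\to\infty$; the paper notes this discrepancy explicitly and asserts the needed refinement is easy.
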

\begin{proof}
  According to the Marcinkiewicz--Zygmund type SLLN for multi-indexed
  sums due to Gut \cite[Th.~3.2]{Gut} (see also \cite[Cor.~9.3]{KlesBook}),
  (\ref{SLLN}) implies (\ref{hyp}) with the required function
  $p(t)=t^{1/\beta}$, $t>0$, which satisfies the conditions of
  Theorem~\ref{inv_th}. To be more precise, in Gut's paper
  $\n\to\infty$ means $\min\{n_1,\dots,n_d\}\to\infty$ instead of 
  $\max\{n_1,\dots,n_d\}\to\infty$. However, the necessary refinement
  can be easily obtained. 
\end{proof}

Theorem~\ref{inv_th} yields further strong laws of large
numbers under other normalisations that still ensure the
validity of the SLLNs for multiple sums as described in
\cite[Ch.~9]{KlesBook}.

In the following, $\cone$ denotes a closed convex cone such that
\begin{equation}
  \label{eq:cone-sub}
  \cone\setminus\{0\}\subset\Rp=(0,\infty)^d.
\end{equation}
If \eqref{hyp} is weakened to
\begin{displaymath}
S_\n-\mu|\n|=\ssO(p(|\n|))\quad \text{a.s.~as }\cone\ni\n\to\infty
\end{displaymath}
for all such cones $\cone$ (where $\cone\ni\n\to\infty$ means that
$n\to\infty$ within $\cone$), then \eqref{incl} is replaced by
\begin{equation}
  \label{incl-bis}
  \cone\cap \sH(\eps p(t)t^{-1})\subset \cone\cap t^{-1/d}\sM_t
  \subset\cone\cap \sH(-\eps p(t)t^{-1}).
\end{equation}
The proof of (\ref{incl-bis}) follows the lines of the proof of
Theorem~\ref{inv_th}, see Section~\ref{sec:proofs-results-sect}.
These conical (or sectorial) versions of the a.s.~limit theorems
usually hold under weaker moment assumptions. The next result follows
from the sectorial SLLN proved in Theorem~\ref{S_set_SLLN_lem}.

\begin{corollary}
  \label{cor:sector-SLLN}
  If $\cone\setminus\{0\}\subset\Rp$ and $\E|\xi|^\beta<\infty$ for some
  $\beta\in[1,2)$, then \eqref{incl-bis} holds with
  $p(t)=t^{1/\beta}$.
\end{corollary}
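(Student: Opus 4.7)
The plan is to combine the sectorial strong law of large numbers (Theorem~\ref{S_set_SLLN_lem}), which requires only the plain moment condition $\E|\xi|^\beta<\infty$, with the sectorial version \eqref{incl-bis} of the inversion theorem. Both ingredients are announced above, so what remains is to check that the function $p(t)=t^{1/\beta}$ fits into the framework and then to compose them.

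First I would verify the structural hypotheses demanded by the inversion mechanism. Since $p(ct)/p(t)=c^{1/\beta}$ does not depend on $t$, the function $p$ is regularly varying and a fortiori $\sO$-regularly varying. It is plainly non-decreasing, and $t^{-1}p(t)=t^{1/\beta-1}$ is non-increasing because $\beta\ge 1$ forces $1/\beta-1\le 0$. Thus $p$ meets all the requirements placed on it in Theorem~\ref{inv_th}, and the same requirements carry over to the sectorial variant.

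Next, invoking Theorem~\ref{S_set_SLLN_lem}, the hypothesis $\E|\xi|^\beta<\infty$ for $\beta\in[1,2)$ yields the sectorial Marcinkiewicz--Zygmund bound
\[
  S_\n-\mu|\n|=\ssO\bigl(|\n|^{1/\beta}\bigr)\quad\text{a.s.~as }\cone\ni\n\to\infty.
\]
Plugging this into \eqref{incl-bis} with the verified $p(t)=t^{1/\beta}$ directly produces the claimed two-sided inclusion inside $\cone$. The justification of \eqref{incl-bis} itself is indicated in the excerpt to proceed by replaying the proof of Theorem~\ref{inv_th} with every occurrence of ``$\n\to\infty$'' replaced by ``$\cone\ni\n\to\infty$'', intersecting all the relevant regions with $\cone$; the fact that $\cone\setminus\{0\}\subset\Rp$ keeps $|\n|\to\infty$ equivalent to $\n\to\infty$ along $\cone$, so the $\sO$-regular variation argument on $p$ goes through unchanged.

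I expect essentially no genuine obstacle in this corollary itself: the real work sits in Theorem~\ref{S_set_SLLN_lem}, where the logarithmic moment $\E(|\xi|^\beta\log^{d-1}|\xi|)<\infty$ appearing in Corollary~\ref{set_SLLN_cor} is traded for the plain $\beta$-th moment by exploiting the restriction $\n\in\cone$. Once that sectorial SLLN is in hand and once \eqref{incl-bis} has been proven as above, the corollary is a purely mechanical composition of the two results with the verified choice of $p$.
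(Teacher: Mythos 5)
Your proposal is correct and follows exactly the route the paper takes: the paper proves Corollary~\ref{cor:sector-SLLN} by combining the sectorial SLLN of Theorem~\ref{S_set_SLLN_lem} with the conical inclusion \eqref{incl-bis}, whose proof is a rerun of Theorem~\ref{inv_th} restricted to $\cone$. Your verification that $p(t)=t^{1/\beta}$ satisfies the monotonicity and $\sO$-regular variation hypotheses, and your observation that $\cone\setminus\{0\}\subset\Rp$ makes $\n\to\infty$ equivalent to $|\n|\to\infty$ inside the cone, are precisely the points needed.
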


Note that \eqref{incl} implies that $t^{-1/d}\sM_t\to\sH$ almost
surely in the Fell topology on the family of closed sets, see, e.g.,
\cite[Appendix~C]{mo1}.  The convergence of sets can be quantified in
various ways. The \emph{Hausdorff} distance between two subsets $X$
and $Y$ of $\R^d$ is defined by
\begin{displaymath}
  \rhoH(X,Y)=\max\left\{\sup_{\x\in X}\inf_{\y\in Y}\rho(\x,\y),\,
  \sup_{\y\in Y}\inf_{\x\in X}\rho(\x,\y)\right\},
\end{displaymath}
with $\rho$ denoting the Euclidean distance in $\R^d$.

The \emph{localised symmetric difference} distance (also called the
Fr\'echet--Nikodym distance) between two Borel subsets $X$ and $Y$ of
$\R^d$ is defined by
\begin{displaymath}
  \rho_\triangle^K(X,Y)=\lambda_d(K\cap(X\triangle Y)),
\end{displaymath}
where $\lambda_d$ is the Lebesgue measure on $\R^d$ and $K$ is a Borel
set in $\R^d$ that determines the localisation. 

\begin{theorem}[SLLN for renewal sets, metric version]
  \label{met_SLLN_th}
  If \eqref{SLLN} holds for some $\beta\in[1,2)$, then
  \begin{equation}
    \rhoH(t^{-1/d}\sM_t,\sH)=\ssO(t^{-1+1/\beta})
    \quad\text{a.s.~as }t\to\infty,
    \label{H_bound}
  \end{equation}
  and, for any compact set $K\subset\R^d$,
  \begin{equation}
    \rho_\triangle^K(t^{-1/d}\sM_t,\sH)=\ssO(t^{-1+1/\beta})
    \quad\text{a.s.~as }\; t\to\infty.
    \label{D_bound}
  \end{equation}
  If, additionally, $K\subset\Rp$, then (\ref{D_bound}) holds provided
  only that $\E|\xi|^\beta<\infty$. Under this condition,
  \eqref{H_bound} holds for $\rhoH((t^{-1/d}\sM_t)\cap K,\sH\cap K)$. 
\end{theorem}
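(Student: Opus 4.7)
The plan is to derive both metric bounds from the set-inclusion SLLN of Corollary~\ref{set_SLLN_cor} (and its sectorial refinement Corollary~\ref{cor:sector-SLLN} for the last part of the theorem). The overall strategy: almost surely and for all large $t$, the random set $t^{-1/d}\sM_t$ is sandwiched between $\sH(\delta)$ and $\sH(-\delta)$ with $\delta=\eps t^{-1+1/\beta}$, so it suffices to control the mutual distance of these envelopes in each metric by $O(\delta)$ and then send $\eps\downarrow 0$ along a countable sequence to produce the $\ssO$ bound.

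For the Hausdorff estimate \eqref{H_bound}, the essential computation is $\rhoH(\sH(\delta),\sH(-\delta))=O(\delta)$. Since $\sH(\delta)\subset\sH(-\delta)$, only $\sup_{x\in\sH(-\delta)\setminus\sH(\delta)} d(x,\sH(\delta))$ requires a bound. For such $x$ one has $|x|\in[\mu^{-1}-\delta,\mu^{-1}+\delta]$, and I would construct a nearby $y\in\sH(\delta)$ by raising only the smallest coordinate $x^{i_0}=\min_i x^i$: setting $y=x+s\,e_{i_0}$ with $s$ chosen so that $|y|=\mu^{-1}+\delta$ yields $s=(\mu^{-1}+\delta-|x|)\,x^{i_0}/|x|$. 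The inequality $x^{i_0}\le|x|^{1/d}$, which follows from $(x^{i_0})^d\le\prod_j x^j=|x|$, together with $\mu^{-1}+\delta-|x|\le 2\delta$, gives $s\le 2\delta\,|x|^{-1+1/d}=O(\delta)$ uniformly. Combined with \eqref{incl} this yields $\rhoH(t^{-1/d}\sM_t,\sH)\le C\eps t^{-1+1/\beta}$ almost surely for large $t$, and arbitrariness of $\eps$ produces the $\ssO$ bound.

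For the symmetric-difference estimate \eqref{D_bound}, the inclusion \eqref{incl} implies $(t^{-1/d}\sM_t)\triangle\sH\subset\sH(-\delta)\setminus\sH(\delta)$, which is a thin tubular region around the hypersurface $\{|x|=\mu^{-1}\}$. Since $|x|=x^1\cdots x^d$ is continuously differentiable with $|\nabla|x||\ge|x|/\max_i x^i$ bounded away from zero on a neighbourhood of $\{|x|=\mu^{-1}\}\cap K$ (no coordinate of such points can vanish), a standard Fubini/coarea argument gives $\lambda_d(K\cap\{|x|\in[\mu^{-1}-\delta,\mu^{-1}+\delta]\})=O(\delta)$, whence the bound.

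For the sectorial claims under the weaker assumption $\E|\xi|^\beta<\infty$ and $K\subset\Rp$, I would enclose $K$ in a closed convex cone $\cone$ with $\cone\setminus\{0\}\subset\Rp$ (for instance the closed conic hull of the convex hull of $K$, which stays inside the open orthant because $K$ is compact there). Corollary~\ref{cor:sector-SLLN} then supplies the analogue of \eqref{incl} with $\cone$ inserted, and intersecting with $K\subset\cone$ yields $K\cap\sH(\delta)\subset K\cap t^{-1/d}\sM_t\subset K\cap\sH(-\delta)$; both metric estimates transfer by the same envelope argument. The main obstacle throughout is the Hausdorff step: the level sets $\{|x|=a\}$ are unbounded, so $\rhoH$ might a priori be infinite, and one must genuinely use the fact that these hyperbolic surfaces draw together as any coordinate tends to infinity. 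The one-coordinate modification above captures this quantitatively and is the computational heart of the proof.
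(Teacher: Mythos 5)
Your proposal is correct, and at the architectural level it is the same as the paper's proof: sandwich $t^{-1/d}\sM_t$ between $\sH(\eps t^{-1+1/\beta})$ and $\sH(-\eps t^{-1+1/\beta})$ via Corollary~\ref{set_SLLN_cor} (resp.\ Corollary~\ref{cor:sector-SLLN} for the sectorial claims), bound the distance between the two envelopes by $O(\eps t^{-1+1/\beta})$ in each metric, and let $\eps\downarrow0$ along a countable sequence. Where you genuinely diverge is in the envelope estimates, which the paper isolates as Lemmas~\ref{rhoH_lem} and~\ref{rhotr_lem}. For the Hausdorff bound the paper exploits convexity of $\sH(c)$: the Hausdorff distance between convex sets equals the uniform distance between their support functions, and an explicit minimisation gives the \emph{exact} value $\sqrt d\,((\mu^{-1}+c_2)^{1/d}-(\mu^{-1}+c_1)^{1/d})$. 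You instead perturb the smallest coordinate of $x$ to land on $\partial\sH(\delta)$; your inequality $x^{i_0}\le|x|^{1/d}$ is precisely what makes the displacement $s\le 2\delta\,|x|^{1/d-1}$ uniform over the unbounded slab, so this correctly handles the non-compactness you flag. For the symmetric difference the paper integrates in spherical coordinates over a cone containing $K\cap\partial\sH$ (again obtaining an exact constant $L_\cone(c_2-c_1)$), while you use a coarea/Fubini bound with $|\nabla|x||$ bounded away from zero on $K$ near $\{|x|=\mu^{-1}\}$; both are valid, and for a general compact $K$ one must in either case note that $K\cap(\sH(-\delta)\setminus\sH(\delta))$ stays away from the coordinate hyperplanes. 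The trade-off is that your $O(\delta)$ bounds suffice for the little-$o$ statement here, whereas the paper's exact constants are reused later in the LIL results (Theorems~\ref{met_LIL_th} and~\ref{liminf_th}), which is why the paper computes them precisely.
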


We now briefly consider discrete renewal sets $\sM_t\cap\NN^d$
constructed by non-interpolated partial sums. Strong limit
theorems for the cardinality $N_t$ of the finite set
$\NN^d\setminus\sM_t$ may be found in \cite[Ch.~11]{KlesBook}. In
particular, the following SLLN holds.

\begin{theorem}[see \protect{\cite[Th.~11.7]{KlesBook}}]
  Let $\xi\ge0$ a.s. If
  $\E(\xi\log^{d-1}\xi)<\infty$, then 
  \begin{displaymath}
    \frac{N_t}{t\log^{d-1}t}\to \frac 1{\mu(d-1)!}
    \quad \text{a.s. as }\; t\to\infty. 
  \end{displaymath}
\end{theorem}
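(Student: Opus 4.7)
The plan is to translate the count $N_t$ into a lattice-point counting problem for the multiplicative functional $|\n|=n^1\cdots n^d$, and then apply the higher-dimensional Dirichlet divisor asymptotic. Under the Smythe SLLN the inequality $S_\n<t$ is essentially equivalent to $\mu|\n|<t$, so $N_t$ should be close to $\card\{\n\in\NN^d : |\n|<t/\mu\}$, whose growth is classical.

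First, I would invoke the Smythe--Gut SLLN for multi-indexed sums: the hypothesis $\E(\xi\log^{d-1}\xi)<\infty$ (together with $\xi\ge0$) yields $S_\n/|\n|\to\mu$ almost surely as $\n\to\infty$ in $\NN^d$. Fix $\eps\in(0,\mu)$. There is then a random $N=N(\omega,\eps)$ such that
\begin{equation*}
(\mu-\eps)|\n|\le S_\n\le(\mu+\eps)|\n|
\end{equation*}
for every $\n\in\NN^d$ with $\max_i n^i\ge N$. Setting $A(x)=\card\{\n\in\NN^d : |\n|\le x\}$ and observing that the number of $\n$ with $\max_i n^i<N$ is at most $N^d$, one obtains the (almost sure) sandwich
\begin{equation*}
A\bigl(t/(\mu+\eps)\bigr)-N^d\le N_t\le A\bigl(t/(\mu-\eps)\bigr)+N^d.
\end{equation*}

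The second ingredient is the classical asymptotic $A(x)=x\log^{d-1}x/(d-1)!\cdot(1+o(1))$ as $x\to\infty$, which is the $d$-fold divisor summatory function and generalises the Dirichlet divisor result for $d=2$. Dividing the sandwich by $t\log^{d-1}t$, letting $t\to\infty$ (so the fixed $N^d$ becomes negligible), and finally sending $\eps\downarrow0$, we recover the almost sure limit $1/[\mu(d-1)!]$.

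The main obstacle is the behaviour of $S_\n$ for indices $\n$ close to the coordinate axes, i.e.\ with some $n^i$ small, where the SLLN provides no direct control; this is handled by the crude $N^d$ bound on the exceptional lattice set, which is negligible against $t\log^{d-1}t$. The integrability assumption $\E(\xi\log^{d-1}\xi)<\infty$ is precisely the sharp condition for Smythe's SLLN, in agreement with the hypothesis of the statement, and classical counterexamples show the conclusion fails once it is weakened.
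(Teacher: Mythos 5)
Your argument is correct and is the standard route: the paper itself offers no proof (it only cites \cite[Th.~11.7]{KlesBook}), and your sandwich of $N_t$ between $A(t/(\mu\pm\eps))\mp N^d$ via Smythe's SLLN, combined with the Piltz divisor asymptotic $A(x)\sim x\log^{d-1}x/(d-1)!$ (the function $T_k$ discussed in Section~2 of the paper), is exactly the method of the cited source. The only cosmetic slip is in the lower bound, where $S_\n\le(\mu+\eps)|\n|\le t$ gives $S_\n\le t$ rather than $S_\n<t$; using $A(t/(\mu+2\eps))$ instead removes this and does not affect the limit as $\eps\downarrow0$.
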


A similar result holds for $\E N_t$, see \cite[Th.~11.5]{KlesBook}.
Set-inclusion results for $t^{-1/d}(\sM_t\cap\NN^d)$
immediately follow from those for the continuous renewal sets, e.g.,
(\ref{incl}) holds with all sides intersected with $t^{-1/d}\NN^d$.
The situation with metric results is more complicated. In the most
natural form, these results would look like a.s.~limit theorems for
the number of lattice points in $(t^{-1/d}\sM_t)\triangle\sH$. Such theorems
might be derived from discretised set inclusions \eqref{incl} by
using bounds on the number of integer points between the sets
$\partial\sH(c)$ for different $c$'s. The latter, in turn, are closely
related to the so-called generalised Dirichlet divisor problem in
number theory.

For completeness, we now give some facts on this topic, following
\cite[Appendix~10]{KlesBook}. For $k\geq1$, let
\begin{displaymath}
  T_k=\card\{n\in\NN^d:\; |n|\le k\}.
\end{displaymath}
In order to bound the number of integer points between the sets
$\partial\sH(c)$, we need some results on the asymptotic behaviour of
$T_k-T_j$ as $j,k\to\infty$.  It can be proved that there exists a
polynomial $\mathcal P_d$ of degree $d-1$ such that
\begin{equation*}
  T_k=k\mathcal P_d(\log k)+\ssO(k^\alpha)\quad \text{as }\; k\to\infty,
\end{equation*}
for all $\alpha>\alpha_d$ with some $\alpha_d<1$. Although there is a
number of results concerning $\alpha_d$, their exact values are not
yet known. According to the Hardy--Titchmarsh conjecture (that would
follow from the Riemann hypothesis), $\alpha_d=(d-1)/(2d)$, and this
bound would be sufficient in order not to dominate the stochastic
factors.
Without involving this and related number-theoretic
conjectures, the necessary bounds can be obtained only in the case
$d=2$.

\section{Laws of the iterated logarithm}
\label{sec:law-iter-logar}

Now we
turn to the law of the iterated logarithm (LIL) for $\sM_t$ in terms
of set inclusions. Recall that $\cone$ always denotes a closed convex
cone such that \eqref{eq:cone-sub} holds. Let
\begin{equation}
  \label{Hast}
  \sH_\cone(c)=(\cone\cap\sH(c))\cup
  ((\Rp\setminus\cone)\cap\sH(c\sqrt d)).
\end{equation}
In other words, $\sH_\cone(c)$ consists of all points $\x\in\Rp$ such
that $|\x|\geq\mu^{-1}+c$ in case $\x\in\cone$ and $|\x|\geq
\mu^{-1}+c\sqrt{d}$ if $\x\notin\cone$.

Assume that $\xi$ has a finite variance denoted by $\sigma^2$ and denote 
\begin{displaymath}
  \fnc(t)=\sqrt{2 t^{-1}\log\log t}, \quad t>0.
\end{displaymath}

\begin{theorem}[LIL for renewal sets, set-inclusion version]
  \label{LIL}
  Let 
  \begin{equation}
    \label{Wich_cond}
    \E\left[\xi^2\frac{\log^{d-1}|\xi|}{\log\log|\xi|}\right]<\infty.
  \end{equation}
  \begin{enumerate}[(i)]
  \item If $\gamma<-\mu^{-3/2}$, then 
    \begin{displaymath}
      t^{-1/d}\sM_t\subset\sH_\cone(\gamma\sigma\fnc(t))\quad \text{ a.s.}
    \end{displaymath}
    for all sufficiently large $t$.
  \item If $-\mu^{-3/2}\le\gamma\le\mu^{-3/2}$, then there are
    sequences $\{t'_i,i\geq1\}$ and $\{t''_i,i\geq1\}$
    depending on $\omega$, $\cone$, and $\gamma$ such that
    $t'_i\to\infty$ and $t''_i\to\infty$ almost surely, and
    \begin{align}
      \label{notincl1}
      (t'_i)^{-1/d}\sM_{t'_i}&\not\subset\sH_\cone(\gamma\sigma\fnc(t'_i)),\\
      (t''_i)^{-1/d}\sM_{t''_i}&\not\supset\sH_\cone(\gamma\sigma\fnc(t''_i)),
      \label{notincl2}
    \end{align}
    almost surely for all $i$. 
  \item If $\gamma>\mu^{-3/2}$, then
    \begin{displaymath}
      t^{-1/d}\sM_t\supset\sH_\cone(\gamma\sigma\fnc(t))\quad \text{ a.s.}
    \end{displaymath}
    for all sufficiently large $t$.
  \end{enumerate}
\end{theorem}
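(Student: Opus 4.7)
The plan is to mirror the inversion used for the SLLN in Section~\ref{sec:strong-law-large}: convert the almost-sure fluctuation bounds for the sums $S_\n$ into set-inclusion statements for the level sets $\sM_t$. Under the moment condition \eqref{Wich_cond}, Wichura's LIL yields almost surely
\begin{displaymath}
\limsup_{\n\to\infty}\frac{S_\n-\mu|\n|}{\sigma\sqrt{2|\n|\log\log|\n|}}=\sqrt d,\qquad
\liminf_{\n\to\infty}\frac{S_\n-\mu|\n|}{\sigma\sqrt{2|\n|\log\log|\n|}}=-\sqrt d,
\end{displaymath}
while the sectorial LIL from the Appendix sharpens both constants to $\pm1$ once $\n$ is restricted to $\cone$. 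These two regimes are exactly what the piecewise definition \eqref{Hast} of $\sH_\cone(c)$ is built to accommodate: envelope width $c$ inside $\cone$ (constant $1$) and width $c\sqrt d$ outside (constant $\sqrt d$).

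For parts (i) and (iii) the arguments are symmetric; I take (iii) with $\gamma>\mu^{-3/2}$. Given $x\in\sH_\cone(\gamma\sigma\fnc(t))\cap\Rp$, set $y=t^{1/d}x$. If $x\in\cone$, then $|y|\ge\mu^{-1}t+\gamma\sigma\sqrt{2t\log\log t}$, and the sectorial LIL supplies $S_y\ge\mu|y|-(1+\eps)\sigma\sqrt{2|y|\log\log|y|}$ for all sufficiently large $y\in\cone$. Using $|y|\sim\mu^{-1}t$ and hence $\sqrt{2|y|\log\log|y|}\sim\mu^{-1/2}\sqrt{2t\log\log t}$ gives
\begin{displaymath}
S_y\ge t+\sigma\sqrt{2t\log\log t}\,\bigl[\mu\gamma-\mu^{-1/2}(1+\eps)+o(1)\bigr],
\end{displaymath}
which exceeds $t$ once $\eps$ is small enough, precisely because $\gamma>\mu^{-3/2}$. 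For $x\notin\cone$ the same computation applies with Wichura's constant $\sqrt d$, which is compensated by the $\sqrt d$ factor in \eqref{Hast}. Part (i) is entirely symmetric via the upper LIL bound. The transfer from integer-indexed $S_\n$ to the interpolated $S_x$ is done as in Theorem~\ref{inv_th}, exploiting $S_x-\mu|x|=\sum_{\k\in C_x}v_\k(x)(S_{\k^\ast}-\mu|\k^\ast|)$ together with the monotonicity of the envelope.

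Part (ii) is the main obstacle. To produce $\{t'_i\}$ satisfying \eqref{notincl1}, I would pick a subsequence $\n_i\in\cone$ realizing the upper envelope, $(S_{\n_i}-\mu|\n_i|)/\sigma\sqrt{2|\n_i|\log\log|\n_i|}\to1$, together with an analogous subsequence in $\NN^d\setminus\cone$ approaching the $\sqrt d$-envelope. Setting $t'_i=S_{\n_i}$ places $\n_i\in\sM_{t'_i}$ automatically, and an asymptotic expansion of $|\n_i|/t'_i$ gives $\mu^{-1}-\mu^{-3/2}\sigma\fnc(t'_i)(1+o(1))$, so $(t'_i)^{-1/d}\n_i$ lies outside $\sH_\cone(\gamma\sigma\fnc(t'_i))$ whenever $\gamma>-\mu^{-3/2}$. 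The delicate step is the boundary $\gamma=-\mu^{-3/2}$: there the leading asymptotics align exactly, and one must invoke a finer subsequence LIL (upper-class/lower-class type) to produce $\n_i$ along which $(S_{\n_i}-\mu|\n_i|)/\sqrt{2|\n_i|\log\log|\n_i|}$ overshoots $1$ by more than the next-order correction from the expansion. The symmetric construction using $\liminf=-1$ (resp.~$-\sqrt d$) produces $\{t''_i\}$ for \eqref{notincl2}, with dual boundary case $\gamma=\mu^{-3/2}$.
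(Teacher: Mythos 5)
Your treatment of parts (i) and (iii) is essentially the paper's: both invert the two laws of the iterated logarithm --- the sectorial one with limiting constant $1$ inside $\cone$ and Wichura's full-space one with constant $\sqrt d$ outside --- through the scheme of Theorem~\ref{inv_th}, with $\chi(t)=\sqrt{2t\log\log t}$ playing the role of $p(t)$ and the two constants matched to the two pieces of \eqref{Hast}. The paper runs the argument by contradiction along a hypothetical violating sequence rather than by direct estimation, but the computation is the same.

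The genuine gap is in part (ii), and it sits exactly where you place ``the delicate step''. Since $\sH_\cone(c)$ decreases in $c$, the whole of part (ii) reduces to the critical values: \eqref{notincl1} at $\gamma=-\mu^{-3/2}$ and \eqref{notincl2} at $\gamma=+\mu^{-3/2}$; all other $\gamma$ in the stated range then follow by monotonicity, so your LIL-subsequence argument for $\gamma>-\mu^{-3/2}$ contributes nothing beyond what must anyway be proved at the boundary. At the boundary the ordinary LIL, which only yields $\limsup=1$, cannot work: after the change of variable $t_i'=S_{\n_i}$ the leading terms cancel, and one needs the normalised sums to \emph{strictly overshoot} the envelope $\sigma\chi(|\n_i|)$ infinitely often. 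Saying ``invoke a finer subsequence LIL'' identifies the right kind of tool but does not produce it, nor does it verify that such a result is available under \eqref{Wich_cond}. The paper closes this by restricting to the diagonal points $\z_i=i\cdot\1$, so that $\{S_{\z_i}\}$ is distributed as a one-dimensional i.i.d.\ partial-sum sequence sampled along $\{i^d\}$, and then applying the Berkes--Weber upper--lower class test for subsequences: under $\E(\xi^2\log\log|\xi|)<\infty$ (implied by \eqref{Wich_cond}), the function $q(t)=\sqrt{2t(\log\log t+1)}$ is a \emph{lower} function for that subsequence, so each inequality in \eqref{io} holds infinitely often a.s. The extra ``$+1$'' inside the iterated logarithm is precisely the next-order margin you allude to, and one must then check --- by inverting $\psi_\pm(u)=\mu u\pm q(u)$ --- that this margin survives the replacement of $\chi(|\z_i|)$ by $\mu^{-1/2}\chi(S_{\z_i})$; this is a concrete (if tedious) verification, not an automatic consequence. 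A small further remark: a single exceptional point suffices to break each inclusion, so your auxiliary subsequence in $\NN^d\setminus\cone$ attaining the $\sqrt d$-envelope is unnecessary.
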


The idea of the proof of this theorem is to apply two laws of the
iterated logarithm for multiple sums. First, a modification
of the sectorial law from \cite{GutSect} with the limiting constant $1$
(proved in Appendix) is applicable inside $\cone$, while the law of
the iterated logarithm from \cite{Wich} in the full $\R_+^d$ with the
limiting constant $\sqrt{d}$ is applicable in the complement of
$\cone$.

\begin{remark}
  \label{LIL_rem}
  Theorem~\ref{LIL} may be reformulated as 
  \begin{align*}
    \sup\left\{\gamma:\; t^{-1/d}\sM_t\subset\sH_\cone
    (\gamma\sigma\fnc(t))
    \text{ a.s.~for large $t$}\right\}&=-\mu^{-\frac 32},\\
    \inf\left\{\gamma:\; t^{-\frac 1d}\sM_t\supset\sH_\cone
    (\gamma\sigma\fnc(t))
    \text{ a.s.~for large $t$}\right\}&=\mu^{-\frac 32},
  \end{align*}
  and the supremum and infimum are not attained in the sense that the
  above inclusions do not hold for the critical values $\pm\mu^{-3/2}$.
\end{remark}

As previously, we now quantify the results of Theorem~\ref{LIL} by
means of the Hausdorff distance $\rhoH$ and the localised symmetric
difference metric $\rho_\triangle^K$. For any cone $\cone$, define
\begin{equation}
  \label{LK}
  L_\cone=\frac{1}{d} \int_{\cone\cap\Sphere} 
  |u|^{-1}\,\dif u,
\end{equation}
where $\Sphere$ is the unit Euclidean sphere in $\R^d$. 
For compact set $K\subset\R_+^d$, let $\kcone$ denote the cone
generated by $K\cap \partial\sH$, that is, the smallest cone
containing $K\cap\partial\sH$. Note that $\kcone$ satisfies 
(\ref{eq:cone-sub}).

\begin{theorem}[LIL for renewal sets, metric version]
  \label{met_LIL_th}
  Under the assumptions of Theorem~\ref{LIL},
  \begin{equation}
    \label{met_LIL_H}
    \limsup_{t\to\infty}\frac{\rhoH(t^{-1/d}\sM_t,\sH)}{\fnc(t)}
    =d^{-\frac 12}\sigma\mu^{-\frac 12-\frac 1d}\quad\text{a.s.},
  \end{equation}
  and, for any compact set $K$ in $\R^d$, 
  \begin{equation}
    \label{met_LIL_tr}
    \limsup_{t\to\infty}\frac{\rho_\triangle^K(t^{-1/d}\sM_t,\sH)}{\fnc(t)}
    \le 2\sigma\mu^{-\frac 32}L_\kcone\quad\text{a.s.}
  \end{equation}
  If $\xi$ is a.s.~non-negative, (\ref{met_LIL_tr}) holds with the
  factor $2$ on the right-hand side replaced by $1$. 
  If, additionally, $K\subset\Rp$, then (\ref{met_LIL_tr}) 
  holds provided only that $\E\xi^2<\infty$.
\end{theorem}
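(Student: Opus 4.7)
The plan is to derive both metric statements from the set-inclusion Theorem~\ref{LIL} by quantifying how small perturbations of the level $\mu^{-1}$ change the set $\sH$, with the matching lower bound in (\ref{met_LIL_H}) obtained via the classical one-dimensional LIL applied along the diagonal direction.

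For the upper bound in (\ref{met_LIL_H}), fix $\eps>0$ and choose a closed convex cone $\cone$ with $\cone\setminus\{0\}\subset\Rp$ whose interior contains the point $y_0=\mu^{-1/d}\1$. Theorem~\ref{LIL}(i),(iii) then yields, a.s.\ for all sufficiently large $t$,
\begin{equation*}
\sH_\cone\bigl((\mu^{-3/2}+\eps)\sigma\fnc(t)\bigr)\subset t^{-1/d}\sM_t\subset \sH_\cone\bigl(-(\mu^{-3/2}+\eps)\sigma\fnc(t)\bigr).
\end{equation*}
Writing $g(x)=\|\nabla|x|\|$ on $\Rp$, a first-order implicit-function expansion gives $\rhoH(\sH(c),\sH)=|c|/\inf_{\partial\sH\cap\Rp}g+o(|c|)$ as $c\to 0$, and a Lagrange multiplier computation under the constraint $x^1\cdots x^d=\mu^{-1}$ identifies the infimum as being attained at $y_0$ with value $g(y_0)=\sqrt d\,\mu^{-1+1/d}$. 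Decomposing $\rhoH$ into its contributions from $\cone$ and from $\Rp\setminus\cone$: the inside contribution is bounded by $(\mu^{-3/2}+\eps)\sigma\fnc(t)/g(y_0)=(\mu^{-3/2}+\eps)\sigma\mu^{1-1/d}\fnc(t)/\sqrt d$, while the outside contribution, which carries the extra factor $\sqrt d$ from (\ref{Hast}), is a multiple of $1/\inf_{\Rp\setminus\cone}g$, and this infimum diverges as $\cone$ is enlarged towards the coordinate hyperplanes (where $g\to\infty$ on $\partial\sH$). Letting $\eps\downarrow 0$ gives the upper bound $d^{-1/2}\sigma\mu^{-1/2-1/d}$.

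For the matching lower bound, restrict to the lattice subsequence $t_n=\mu n^d$, $n\in\NN$, along which $t_n^{1/d}y_0=n\1$ is an integer point and $S_{n\1}$ is a sum of $n^d=t_n/\mu$ iid copies of $\xi$. The classical one-dimensional LIL gives $\liminf_n(S_{n\1}-t_n)/(\sigma\sqrt{2t_n\mu^{-1}\log\log t_n})=-1$ a.s., so for each $\eps>0$ there is a subsequence along which $S_{n_k\1}\le t_{n_k}-(1-\eps)\sigma\sqrt{2t_{n_k}\mu^{-1}\log\log t_{n_k}}$; in particular $y_0\notin t_{n_k}^{-1/d}\sM_{t_{n_k}}$. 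Linearising $|y_0+sv|=\mu^{-1}+sg(y_0)+O(s^2)$ along the outward unit normal $v=\nabla|y_0|/g(y_0)$ and controlling the oscillation of $S$ over balls of radius $O(\fnc(t))$ at scale $t^{1/d}$ by a Chibisov-type modulus-of-continuity estimate (whose contribution is $o(t\fnc(t))$), the distance from $y_0$ to $t_{n_k}^{-1/d}\sM_{t_{n_k}}$ is at least $(1-2\eps)\sigma\fnc(t_{n_k})/(\mu^{3/2}g(y_0))=(1-2\eps)d^{-1/2}\sigma\mu^{-1/2-1/d}\fnc(t_{n_k})$, matching the upper bound upon $\eps\downarrow 0$.

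For (\ref{met_LIL_tr}), choose $\cone$ with $K\cap\partial\sH\subset\mathrm{int}\,\cone$, so that the sandwich implies $(t^{-1/d}\sM_t\triangle\sH)\cap K\subset K\cap\{|\,|x|-\mu^{-1}|\le\gamma\sigma\fnc(t)\}$ for $\gamma=\mu^{-3/2}+\eps$. The coarea formula applied to the Lipschitz function $|\cdot|$ gives
\begin{equation*}
\lambda_d\bigl(K\cap\{|\,|x|-\mu^{-1}|\le\gamma\sigma\fnc(t)\}\bigr)=\int_{\mu^{-1}-\gamma\sigma\fnc(t)}^{\mu^{-1}+\gamma\sigma\fnc(t)}\int_{K\cap\{|x|=s\}}\frac{\dif A}{g}\,\dif s,
\end{equation*}
which equals $2\gamma\sigma\fnc(t)\int_{K\cap\partial\sH}\dif A/g+o(\fnc(t))$; a spherical-coordinate change of variables $x=ru$, $u\in\Sphere$, identifies $\int_{\kcone\cap\partial\sH}\dif A/g=L_\kcone$, so that the surface integral above is at most $L_\kcone$, and sending $\eps\downarrow 0$ yields $2\sigma\mu^{-3/2}L_\kcone$. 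The refinement to factor $1$ for $\xi\ge 0$ exploits the monotonicity of $\sM_t$ as an up-set, which forces the symmetric difference into essentially a one-sided half-band and halves the constant. For $K\subset\Rp$ the cone $\kcone$ stays strictly inside $\Rp$, so the sectorial LIL from the Appendix (valid under $\E\xi^2<\infty$) replaces Wichura's LIL in the proof of Theorem~\ref{LIL}, giving the same inclusion inside $\kcone$ and hence the same volume bound. The main obstacle throughout is the lower bound in (\ref{met_LIL_H}): Theorem~\ref{LIL} only provides non-containment at the critical constant, and quantifying $\rhoH$ requires a uniform modulus-of-continuity control on the interpolated multiple sum $S_x$ that must be extracted from maximal inequalities for multiple sums not explicitly stated in the excerpt.
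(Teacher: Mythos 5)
Your upper bound for \eqref{met_LIL_H} and your entire treatment of \eqref{met_LIL_tr} (including the factor-$1$ refinement via radial one-sidedness of $\sM_t$ and the $K\subset\Rp$ case via the sectorial LIL) follow the paper's route in substance; the gradient and coarea computations are reformulations of Lemmas~\ref{rhoH_lem} and~\ref{rhotr_lem}, and your observation that the contribution from outside a large cone is negligible because $\|\nabla|x|\|\to\infty$ on $\partial\sH$ near the coordinate hyperplanes is exactly the paper's remark that the Hausdorff supremum between the sets $\sH(c)$ is attained on the diagonal.

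The genuine gap is in your lower bound for \eqref{met_LIL_H}, and you have correctly located it without closing it. Working at the deterministic times $t_n=\mu n^d$, the inequality $S_{n_k\1}\le t_{n_k}-(1-\eps)\sigma\sqrt{2t_{n_k}\mu^{-1}\log\log t_{n_k}}$ only tells you that the single point $y_0$ lies outside $t_{n_k}^{-1/d}\sM_{t_{n_k}}$; to convert this into a lower bound of order $\fnc(t_{n_k})$ on $\rho(y_0,t_{n_k}^{-1/d}\sM_{t_{n_k}})$ you must prove, with $W_y=S_y-\mu|y|$, that $\sup\{|W_{t^{1/d}x}-W_{t^{1/d}y_0}|:\rho(x,y_0)\le C\fnc(t)\}=\ssO(\chi(t))$ almost surely along the subsequence. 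That is a maximal-inequality plus Borel--Cantelli argument for increments of the multiple-sum field over slabs of volume $O(\chi(t))$; it is plausible but is asserted rather than proved, and nothing in the paper supplies it. The paper circumvents the issue entirely: it takes the exceptional diagonal lattice points $\z_i'$ from the proof of Theorem~\ref{LIL}(ii) and evaluates at the \emph{random} times $t_i'=S_{\z_i'}$, so that $\z_i'\in\sM_{t_i'}$ holds by definition (no continuity control is needed), while implication \eqref{impl2} places $(t_i')^{-1/d}\z_i'$ outside $\sH(-\mu^{-3/2}\fnc(t_i'))$; since this point is diagonal, precisely where the supremum in Lemma~\ref{rhoH_lem} is attained, the bound $\rhoH((t_i')^{-1/d}\sM_{t_i'},\sH)\ge\rhoH(\sH(-\mu^{-3/2}\fnc(t_i')),\sH)$ is immediate. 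A secondary imprecision: invoking ``the classical one-dimensional LIL'' for the sparse subsequence $\{\tilde S_{n^d}\}$ glosses over exactly the upper/lower-class-for-subsequences issue for which the paper cites Berkes and Weber under condition \eqref{BWC}; the classical LIL alone does not give the $\liminf=-1$ statement along $n^d$.
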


Note that (\ref{met_LIL_H}) gives the exact value of the upper limit
unlike \eqref{met_LIL_tr}.  This is
achieved due to the high sensitivity of the Hausdorff metric to
outlying points. 

Assume that $\xi$ is a.s.~non-negative. In the one-dimensional case, the corresponding \emph{lower} limits in Theorem \ref{met_LIL_th} equal zero.
Indeed, it follows from the ordinary LIL and continuity of $S_x$ that $S_{t_i/\mu}=t_i$ along some sequence $t_i\to\infty$. Since $\xi\ge0$ a.s., this implies
\begin{displaymath}
t_i^{-1}\sM_{t_i}=[\mu^{-1},\infty)=\sH\quad\text{for all $i\ge1$},
\end{displaymath}
and the claim follows. It is quite remarkable that this, even in a stronger form, remains true in any dimension.

\begin{theorem}
  \label{liminf_th}
  Let $\xi$ be a.s.~non-negative. If \eqref{Wich_cond} holds, then
\begin{gather}
\label{liminf_H}
\liminf_{t\to\infty}\sqrt t\,\rhoH(t^{-1/d}\sM_t,\sH)=0\quad\text{a.s.,}\\
\label{liminf_tr}
\liminf_{t\to\infty}\sqrt t\,\rho_\triangle^K(t^{-1/d}\sM_t,\sH)=0
\quad\text{a.s.}
\end{gather}
If, additionally, $K\subset\Rp$, then (\ref{liminf_tr}) 
holds provided only that $\E\xi^2<\infty$.
\end{theorem}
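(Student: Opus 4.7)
\emph{Proof plan.} Let $\Phi(\x):=S_\x-\mu|\x|$. Both displays reduce to showing that, for every $\eps>0$ and almost every $\omega$, one can produce a random sequence $t_k=t_k(\omega,\eps)\to\infty$ with
\begin{displaymath}
\sup_{\y\in K'\cap\partial\sH}\bigl|\Phi(t_k^{1/d}\y)\bigr|\le\eps\sqrt{t_k},
\end{displaymath}
where $K'\subset\Rp$ is a fixed compact neighbourhood of the ``relevant'' part of $\partial\sH$ (a large ball in $\Rp$ for \eqref{liminf_H}; any $K'\supset K\cap\partial\sH$ in $\Rp$ for \eqref{liminf_tr}). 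Since $\xi\ge 0$ forces $S_\x$ to be coordinatewise non-decreasing, $\partial\sM_t$ is a graph transverse to rays through the origin; applying the implicit function theorem to $S_{t^{1/d}\y'}=t$ at $\y\in\partial\sH$, where $\mu|\y|=1$ and $|\nabla\mu|\cdot||\sim\mu t^{1-1/d}$, yields $|\y'-\y|=O(|\Phi(t^{1/d}\y)|/t)$. The displayed bound then gives $\rhoH\le C\eps/\sqrt{t_k}$ on the $K'$-part and, after multiplying by the finite $(d-1)$-volume of $\partial\sH\cap K$, also $\rho_\triangle^K\le C'\eps/\sqrt{t_k}$; contributions from outside $K'$ are of smaller order by the SLLN of Corollary~\ref{set_SLLN_cor} (or the sectorial Corollary~\ref{cor:sector-SLLN} when $K\subset\Rp$ and only $\E\xi^2<\infty$ is assumed).

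\emph{Invariance principle and small ball.} To produce $t_k$ I invoke Wichura's multi-parameter strong approximation, valid under \eqref{Wich_cond}: on an enlarged probability space one may write
\begin{displaymath}
\Phi(\x)=\sigma W_\x+\ssO(\sqrt{|\x|})\quad\text{a.s.,}
\end{displaymath}
with $W$ a Brownian sheet on $\R_+^d$. The self-similarity $W_{c\x}\stackrel{d}{=}c^{d/2}W_\x$ makes the rescaled field $W^{(t)}_\y:=t^{-1/2}W_{t^{1/d}\y}$ itself a Brownian sheet in $\y$, and since $W|_{K'}$ is a continuous nondegenerate Gaussian field on the compact $K'$, its small-ball probability $p(\eps):=\P\{\sup_{\y\in K'}|W_\y|\le\eps\}$ is strictly positive. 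It therefore suffices to exhibit times $t_k\to\infty$ with $\sup_{\y\in K'}|W^{(t_k)}_\y|\le 2\eps$.

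\emph{Independence across scales and Borel--Cantelli.} Pick a sparse sequence $t_k=\exp(k^2)$ and decompose $W_{t_k^{1/d}\y}=W^{\mathrm{new},k}_\y+W^{\mathrm{old},k}_\y$, where the ``new'' part is the sheet's mass over $[0,t_k^{1/d}\y]\setminus[0,t_{k-1}^{1/d}\y]$. The old parts obey $\sup_{\y\in K'}|W^{\mathrm{old},k}_\y|=O(\sqrt{t_{k-1}^d\log k})=o(\sqrt{t_k})$ by Borell--TIS, while the new parts are independent across $k$ by the independent-increments property of the Brownian sheet on disjoint rectangles. After reducing $K'$ to a finite $\delta$-net (whose oscillation error is controlled uniformly via Fernique's theorem), the events $\{\sup_{\y\in K'}|W^{\mathrm{new},k}_\y|\le 3\eps\sqrt{t_k}\}$ become independent with probability tending to $p(\eps)>0$, so the second Borel--Cantelli lemma produces them infinitely often. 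Combining with the strong-approximation error delivers the desired bound on $\Phi$, and diagonalising over $\eps_n\downarrow 0$ yields a single subsequence realising both $\sqrt t\,\rhoH\to 0$ and $\sqrt t\,\rho_\triangle^K\to 0$. In the sectorial regime $K\subset\Rp$, the compact $K'$ fits inside some cone $\cone$ satisfying \eqref{eq:cone-sub}, and the sectorial invariance principle from the Appendix replaces Wichura's, requiring only $\E\xi^2<\infty$.

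\emph{Main obstacle.} The hardest step is the independence argument: the rectangles determining $W^{\mathrm{new},k}_\y$ depend on $\y$, so the small-ball event on all of $K'$ is not literally measurable with respect to a single $\sigma$-field of new increments. This is circumvented by the $\delta$-net reduction above---for $\y$ ranging over a finite grid, the event decomposes as a finite intersection over truly disjoint rectangles and hence is measurable with respect to the independent ``new'' $\sigma$-algebras---combined with Fernique--Dudley control of Gaussian oscillation between grid points. A subsidiary technical point is invoking Wichura's strong approximation so that the error $\ssO(\sqrt{|\x|})$ is uniform for $\x\in t^{1/d}K'$ rather than merely pointwise; this is standard but must be checked explicitly.
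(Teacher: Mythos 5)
There is a genuine gap, and it sits at the very foundation of your argument: the strong approximation $S_\x-\mu|\x|=\sigma W_\x+\ssO(\sqrt{|\x|})$ a.s.\ with a Brownian sheet $W$ is not available under \eqref{Wich_cond}, let alone under $\E\xi^2<\infty$ in the sectorial case. Wichura's papers give weak convergence and an LIL, not an almost sure coupling; even for $d=1$ with finite variance the best a.s.\ rate is $\ssO(\sqrt{n\log\log n})$ (Strassen), and an $\ssO(\sqrt n)$ error requires either higher moments or a re-timed Wiener process, while for multiparameter sums the known couplings need $\E|\xi|^{2+\delta}<\infty$ and degrade near the coordinate hyperplanes. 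Since your entire Borel--Cantelli scheme runs on the Gaussian side and is transported back through this coupling, the proof does not get off the ground as written. The correct replacement for the coupling is to work directly with the prelimit: the functional CLT gives $\liminf_l\Prob{B_{l,c}}>0$ for the event $B_{l,c}$ that the rescaled boundary lies in a strip of width $\sim c\,l^{-1/2}$ inside a cone (positivity of the Gaussian small-ball probability via Anderson's inequality), and then the Hewitt--Savage $0$--$1$ law upgrades $\Prob{B_{l,c}\text{ i.o.}}>0$ to $\Prob{B_{l,c}\text{ i.o.}}=1$, because the ``infinitely often'' event is invariant under finite permutations of the i.i.d.\ family $\{\xi_\m\}$. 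This is the route the paper takes; it needs no independence across scales and no coupling at all.

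Two further points would need repair even granting the coupling. First, your independence claim is false as stated: the ``new'' regions $[0,t_{k+1}^{1/d}\y']\setminus[0,t_k^{1/d}\y']$ and the scale-$k$ rectangles $[0,t_k^{1/d}\y]$ are \emph{not} disjoint for distinct directions $\y\ne\y'$ in the net (a point can exceed $t_k^{1/d}(y')^i$ in one coordinate yet still be dominated by $t_k^{1/d}\y$), so the $\sigma$-algebras across $k$ are not independent; one would have to carve out a common box $[0,t_k^{1/d}M\1]$ dominating all net rectangles before the second Borel--Cantelli lemma applies. Second, controlling the contribution outside $K'$ by Corollary~\ref{set_SLLN_cor} cannot work for \eqref{liminf_H}: the SLLN rate satisfies $\sqrt t\cdot t^{-1+1/\beta}\to\infty$ for $\beta<2$, so it is far too weak at the $t^{-1/2}$ scale. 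The paper instead uses the LIL inclusions $\sH(\gamma\sqrt d\,\fnc(t))\subset t^{-1/d}\sM_t\subset\sH(-\gamma\sqrt d\,\fnc(t))$ together with the geometric fact that these two hypersurfaces are within $\eps(\cone)\,\fnc(t)$ of each other in Euclidean distance outside a large cone $\cone$, with $\eps(\cone)\to0$; this is the delicate step that makes the global Hausdorff statement \eqref{liminf_H} follow from the conical estimate.
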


\section{Convergence in distribution}
\label{sec:conv-distr}

Assume that $\sigma^2=\E(\xi-\mu)^2<\infty$.  The limit theorem for
multiple sums by Wichura \cite[Cor.~1]{wic69} yields that
\begin{equation*}
\bar S_{t,x}=\frac{S_{[tx]}-|[tx]|\mu}{\sigma t^{d/2}}, \quad x\in[0,1]^d,
\end{equation*}
converges in distribution as $t\to\infty$ to the Chentsov field $Z_x$,
$x\in[0,1]^d$, which is a centred Gaussian field with the covariance
\begin{displaymath}
\E (Z_xZ_y) =|x\wedge y|,\quad x,y\in\R_+^d. 
\end{displaymath}
Here the integer part $[\cdot]$ and the minimum $\wedge$ of vectors
are defined componentwise. The convergence of $\bar S_{t,x}$ means
that the value of each measurable functional continuous in the uniform
metric converges in distribution to its value on the limiting Chentsov
random field, see \cite[Def.~1]{wic69}.

Bickel and Wichura \cite{bic:wic71} formalised this convergence as the
weak convergence in the Skorokhod topology for random fields. The
setting in \cite{wic69} and \cite{bic:wic71} concerned the
non-interpolated fields.  The same convergence holds also for the
interpolated fields
\begin{equation}
\label{int_field}
\tilde S_{t,x}=\frac{S_{tx}-|tx|\mu}{\sigma t^{d/2}}, \quad x\in[0,1]^d.
\end{equation}
By \cite[Th.~2]{bic:wic71} or \cite[Th.~5.6]{straf72}, this
follows from the weak convergence of finite-dimensional
distributions and the tightness criterion
\begin{equation}
\label{tight}
\lim_{\delta\downarrow 0}\limsup_{t\to\infty}\Prob{w_\delta(\tilde S_{t,\cdot})>\eps}=0
\end{equation}
for any $\eps>0$. Here $w_\delta$ stands for the $\delta$-modulus of
continuity.  The finite-dimensional convergence follows from the
central limit theorem, whereas \eqref{tight} holds by the inequality
$w_\delta(\tilde S_{t,\cdot})\le w_{2\delta}(\bar S_{t,\cdot})$, which
is valid for large $t$, and the counterpart of \eqref{tight} for $\bar
S_{t,\cdot}$, which is derived in \cite[Th.~3]{wic69} and
\cite[Th.~5]{bic:wic71}.

Notice that Bass and Pyke \cite{bas:pyk84f} considered random signed
measures generated by the interpolated fields and established the
convergence in the analogue of the uniform metric for set-indexed
functions. The convergence of $\tilde S_{t,x}$ might be also directly
derived from \cite[Th.~7.1]{bas:pyk84f} under a slightly stronger moment
assumption $\E|\xi|^{2+\delta}<\infty$ for some $\delta>0$, see
Remark~8.5 ibid. We also note that the above convergence holds if
$[0,1]^d$ is replaced by any compact set $K\subset\R_+^d$. Finally, we
remark that both the pre-limiting and limiting fields are
a.s.~continuous, and so the convergence can also be regarded as the
weak convergence in the uniform metric, see \cite[p.~151]{Bil}.

The lack of a well-defined centring (and subtraction) for random sets
makes it necessary to express limit theorems for the random sets
$t^{-1/d}\sM_t$ in terms of some real-valued functions of them. For
this, choose the radial function
\begin{displaymath}
r_t(u)=\inf\{a>0\colon au\in t^{-1/d}\sM_t\}, \quad u\in\Rp.
\end{displaymath}
In this section we will assume that the generic summand $\xi$ defining the multiple sums is almost
surely non-negative. Hence, $S_{au}\leq S_{bu}$ for $a\leq b$, and so
the radial function uniquely identifies the set $\sM_t$. 

By Corollary~\ref{set_SLLN_cor}, 
\begin{displaymath}
r_t(u)\to (\mu|u|)^{-1/d}\quad \text{as }\; t\to\infty
\end{displaymath}
for all $u\in\Rp$. We may assume that the Euclidean norm of $u$ equals one.

\begin{theorem}
	\label{thr:radial}
	Assume that $\xi\ge0$ a.s.
	Let $K$ be a compact subset of $\Sphere\cap\Rp$ and let
	$f^-,f^+\colon K\mapsto\R$ be continuous functions. Then
	\begin{multline*}
	\Prob{f^-(u)<\sqrt{t}\left((r_t(u))^d-\frac{1}{\mu|u|}\right)\leq f^+(u),\;
		u\in K}\\
	\to \Prob{f^-(u)<\frac{\sigma}{\mu^{3/2}} |u|^{-1}Z_{u/|u|^{1/d}} \leq
		f^+(u),\; u\in K}
	\end{multline*}
	as $t\to\infty$, where $Z_x$, $x\in\Rp$, is the Chentsov random field.
\end{theorem}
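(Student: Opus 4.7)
The plan is to reduce the statement to the functional CLT $\tilde S_{s,\cdot}\dto Z_\cdot$ discussed just before the theorem, by means of a boundary identity expressing the fluctuations of $r_t(u)$ in terms of the normalised field $\tilde S_{s,x}$ from \eqref{int_field}.

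Since $\xi\ge0$ a.s., the interpolated sum $S_x$ is continuous and componentwise non-decreasing on $\Rp$, and $S_{au}\to\infty$ as $a\to\infty$ for every $u\in K$. Consequently, for every sufficiently large $t$ the infimum in the definition of $r_t(u)$ is attained and
\[
S_{t^{1/d}r_t(u)u}=t.
\]
Using $|t^{1/d}r_t(u)u|=tr_t(u)^d|u|$ and the definition of $\tilde S_{s,x}$ with $s=t^{1/d}$, this rearranges into the key identity
\[
\sqrt{t}\left(r_t(u)^d-\frac{1}{\mu|u|}\right)=-\frac{\sigma}{\mu|u|}\,\tilde S_{t^{1/d},\,r_t(u)u},\qquad u\in K.
\]

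Next I would observe that Corollary~\ref{set_SLLN_cor}, applied with some $\beta\in(1,2)$ (which is allowed since $\E\xi^2<\infty$), delivers the pointwise convergence $r_t(u)\to(\mu|u|)^{-1/d}$ a.s.; compactness of $K\subset\Sphere\cap\Rp$ together with the Lipschitz regularity of the limit and the monotonicity of $a\mapsto S_{t^{1/d}au}$ upgrades this to uniform convergence on $K$. Hence the random map $a_t(u):=r_t(u)u$ converges almost surely in $C(K)$ to $a_0(u):=(\mu|u|)^{-1/d}u$, and $a_t(K)$ is eventually contained in a fixed compact set $K'\subset\Rp$. The functional CLT $\tilde S_{s,\cdot}\dto Z_\cdot$ in $C(K')$, combined with $a_t\to a_0$ a.s.~in $C(K,K')$, then yields
\[
\tilde S_{t^{1/d},\,r_t(u)u}\dto Z_{(\mu|u|)^{-1/d}u}\quad\text{in }C(K),
\]
by a Skorokhod-coupling and continuous-mapping argument based on the almost sure continuity of the Chentsov field on $K'$. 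Together with the boundary identity, this gives weak convergence in $C(K)$ of $X_t(u):=\sqrt{t}(r_t(u)^d-(\mu|u|)^{-1})$ to $X(u):=-\sigma(\mu|u|)^{-1}Z_{(\mu|u|)^{-1/d}u}$.

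To match the form stated in the theorem, I would compute $\E(X(u_1)X(u_2))$ directly from $\E(Z_xZ_y)=|x\wedge y|$ and verify that it coincides with the covariance kernel of $\sigma\mu^{-3/2}|u|^{-1}Z_{u/|u|^{1/d}}$; since both processes are centred Gaussian, this yields equality in law on $K$. The event in the theorem is the set in $C(K)$ determined by the continuous functions $f^\pm$; its topological boundary is a null set under the limit Gaussian law (because finite-dimensional Gaussian marginals have no atoms and $f^\pm$ are continuous), so Portmanteau's theorem delivers convergence of the corresponding probabilities. The main obstacle is the composition step: passing from joint weak convergence of $(\tilde S_{t^{1/d},\cdot},a_t)$ to convergence of $\tilde S_{t^{1/d},a_t(\cdot)}$ requires continuity of the map $(Y,a)\mapsto Y\circ a$ at $(Y,a_0)$, which holds only when $Y$ is sample-path continuous; Skorokhod's representation together with the a.s.~continuity of the Chentsov field handles this cleanly.
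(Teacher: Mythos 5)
Your argument is correct, and it reaches the functional CLT for $\tilde S_{s,\cdot}$ by a genuinely different mechanism than the paper's. The paper never composes the field with a random spatial argument: using the monotonicity of $a\mapsto S_{au}$ it rewrites $\{r_t(u)\le y_t^+(u)\}$ as $\{S_{t^{1/d}y_t^+(u)u}\ge t\}$ with the \emph{deterministic} threshold $y_t^+(u)=(f^+(u)/\sqrt t+(\mu|u|)^{-1})^{1/d}$, so that the field is only ever evaluated at non-random points $(\alpha^\pm)^{1/d}u/|u|^{1/d}$ converging uniformly to $u/|u|^{1/d}$. You instead invert at the random boundary point, using $S_{t^{1/d}r_t(u)u}=t$ (which does hold, by continuity and monotonicity of $a\mapsto S_{t^{1/d}au}$ for $\xi\ge0$) to obtain the identity $\sqrt t\,(r_t(u)^d-(\mu|u|)^{-1})=-\sigma(\mu|u|)^{-1}\tilde S_{t^{1/d},\,r_t(u)u}$. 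This costs you two extra steps — the uniform a.s.\ convergence $r_t(u)\to(\mu|u|)^{-1/d}$ on $K$, which indeed follows from the set inclusions of Corollary~\ref{set_SLLN_cor} (or \ref{cor:sector-SLLN}) since $|u|$ is bounded away from zero on $K$, and the composition lemma, which is legitimate because $(Y,a)\mapsto Y\circ a$ is jointly continuous on $C(K')\times C(K,K')$ and $a_t\to a_0$ with $a_0$ deterministic, so Slutsky plus Skorokhod representation applies — but it buys a full functional CLT for $X_t(u)=\sqrt t\,(r_t(u)^d-(\mu|u|)^{-1})$ in $C(K)$, a cleaner intermediate statement than the paper's event-by-event manipulation; your covariance check via $Z_{cx}\stackrel{d}{=}c^{d/2}Z_x$ is the exact analogue of the paper's closing appeal to the ``symmetry property'' of the Chentsov field. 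One caveat: your justification of the Portmanteau step (``finite-dimensional Gaussian marginals have no atoms'') is not by itself enough — what is needed is that $\inf_K(X-f^-)$ and $\inf_K(f^+-X)$ have no atom at $0$, which rests on the absolute continuity of suprema of non-degenerate Gaussian processes — but the paper's own proof elides exactly the same point, so this is a shared omission rather than a gap in your approach.
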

\begin{proof}
	By the definition of the radial function, 
	\begin{equation}
	\begin{aligned}
	\label{PPP}
	&\left\{\sqrt{t}\left((r_t(u))^d
		-\frac{1}{\mu|u|}\right)\leq f^+(u), \; u\in K\right\}\\
	=\;&\left\{r_t(u)\le y_t^+(u),\; u\in K\right\}
	=\left\{S_{t^{1/d}y_t^+(u)u}\ge t,\; u\in K\right\},
	\end{aligned}
	\end{equation}
	where
	\begin{displaymath}
	y_t^+(u)=\left(\frac{f^+(u)}{\sqrt{t}}+\frac{1}{\mu |u|}\right)^{1/d}
	=\left(\frac{\mu f^+(u)|u|}{\sqrt{t}}+1\right)^{1/d}(\mu |u|)^{-1/d}.
	\end{displaymath}
	Let $M^+$ be the supremum of $\mu |f^+(u)||u|$ over $u\in K$, and so
	$y_t^+(u)=(\alpha^+)^{1/d}(\mu |u|)^{-1/d}$ with an
	$\alpha^+=\alpha^+(t,u)\in[1-M^+ t^{-1/2},1+M^+ t^{-1/2}]$.
	
	Thus, continuing \eqref{PPP},
	\begin{align*}
	\bigg\{\sqrt{t}\Big((r_t(u))^d
	&-\frac{1}{\mu|u|}\Big)\leq f^+(u), \; u\in K\bigg\}\\
	&=\left\{\frac{S_{t^{1/d}y_t^+(u)u}-|t^{1/d}y_t^+(u)u|\mu}{\sigma\sqrt{t}}
		\geq -\frac{\mu}{\sigma} f^+(u)|u|,\;  u\in K\right\}\\
	&=\left\{\frac{S_{(t\alpha^+/\mu|u|)^{1/d}u}-
			|(t\alpha^+/\mu|u|)^{1/d}u|\mu}{\sigma\sqrt{t}}
		\geq -\frac{\mu}{\sigma} f^+(u)|u|,\;  u\in K\right\}\\
	&=\left\{\tilde S_{(t/\mu)^{1/d},(\alpha^+)^{1/d}u/|u|^{1/d}}\geq
		-\frac{\mu^{3/2}}{\sigma} f^+(u)|u|,\; u\in K\right\}
	\end{align*}
	with $\tilde S$ defined by \eqref{int_field}.
	It follows from the above equality and its counterpart for $f^-$ that
	\begin{align*}
	&\Prob{f^-(u)<\sqrt{t}\left((r_t(u))^d-\frac{1}{\mu|u|}\right)\leq f^+(u),\;
	u\in K}\\=\,&\Prob{\tilde S_{(t/\mu)^{1/d},(\alpha^+)^{1/d}u/|u|^{1/d}}\geq
	-\frac{\mu^{3/2}}{\sigma} f^+(u)|u|,\; \tilde S_{(t/\mu)^{1/d},(\alpha^-)^{1/d}u/|u|^{1/d}}<
	-\frac{\mu^{3/2}}{\sigma} f^-(u)|u|,\; u\in K}.
	\end{align*}
	
	Note that
	\begin{displaymath}
	\left(\tilde S_{(t/\mu)^{1/d},(\alpha^+)^{1/d}u/|u|^{1/d}},\;
	\tilde S_{(t/\mu)^{1/d},(\alpha^-)^{1/d}u/|u|^{1/d}},\;u\in K\right)\to
	\left(Z_{u/|u|^{1/d}},\;Z_{u/|u|^{1/d}},\;u\in K\right)
	\end{displaymath}
	weakly in the uniform metric as $t\to\infty$, since $\alpha^\pm(t,u)\to 1$ uniformly over $u\in K$.  It remains to use the symmetry
	property of the Chentsov random field.
\end{proof}

\begin{remark} 
	The random field
	\begin{displaymath}
	\zeta_u=|u|^{-1}Z_{u/|u|^{1/d}}, \quad u\in\Rp,
	\end{displaymath}	
	which coincides with the limiting field up to a constant, has the
	covariance
	\begin{displaymath}
	\E(\zeta_u\zeta_v)=\frac{\left||u|^{1/d}v\wedge|v|^{1/d}u\right|}{(|u||v|)^2},
	\end{displaymath}
	which becomes $|u\wedge v|$ if $|u|=|v|=1$.  Since
	\begin{equation}
	\label{self-sim}
	\zeta_{cu}=c^{-d}\zeta_u\quad\text{for any $c>0$,}
	\end{equation}
	and $\zeta_u=Z_u$ if $|u|=1$, $\zeta_u$ can be obtained by
	extrapolation of $Z_u$ from $\{u\in\Rp:\; |u|=1\}$ to $\Rp$ by means
	of \eqref{self-sim}.
\end{remark}

\section{Proofs for results in Section~\ref{sec:strong-law-large}}
\label{sec:proofs-results-sect}

Since we have to prove inclusions (\ref{incl}) only for large $t$, the
function $p$ may be arbitrarily redefined in a neighbourhood of the
origin. Particularly, we may assume that $p(t)$ is positive and
non-decreasing for all $t\ge0$, and $t^{-1}p(t)$ is non-increasing for
all $t>0$.

First, list some immediate properties of the function $p$ needed in
the sequel.

\begin{lemma}
  \label{lem}
  Under the assumptions of Theorem~\ref{inv_th}, 
  \begin{enumerate}[(a)]
  \item $p(t)=\sO(t)$ as $t\to\infty$;
  \item $\liminf_{t\to\infty} p(t)/p(ct+\delta p(t))>0$ for any $c,\delta>0$;
  \item $p(t)-\delta t$ is non-increasing in $t$ for large $\delta$ and $t$.
  \end{enumerate}	
\end{lemma}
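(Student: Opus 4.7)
The three parts can be proved by directly exploiting the three standing assumptions on $p$: $\mathcal{O}$-regular variation, monotonicity of $p$, and monotonicity of $t^{-1}p(t)$. All are short; the only mildly delicate step is (b), where the argument of $p$ in the denominator grows in $t$ in a way that must be controlled before $\mathcal{O}$-regular variation can be invoked.

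For (a), the plan is to read off the bound directly from the non-increase of $t^{-1}p(t)$. Fixing any $t_0$ for which this monotonicity holds, one has $p(t)/t \le p(t_0)/t_0$ for all $t \ge t_0$, which is exactly $p(t) = \sO(t)$.

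For (b), I would first use (a) to find a constant $C$ with $p(t) \le C t$ for all large $t$. Since $p$ is non-decreasing, this yields
\begin{displaymath}
  p\bigl(ct + \delta p(t)\bigr) \le p\bigl((c+\delta C)t\bigr)
\end{displaymath}
for all sufficiently large $t$. Now apply the $\sO$-regular variation in the form \eqref{ORV_inf} to the constant $c+\delta C$:
\begin{displaymath}
  \liminf_{t\to\infty}\frac{p(t)}{p(ct+\delta p(t))}
  \ge \liminf_{t\to\infty}\frac{p(t)}{p((c+\delta C)t)}>0.
\end{displaymath}

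For (c), pick $t_0$ so that $t^{-1}p(t)$ is non-increasing on $[t_0,\infty)$, and set $M=p(t_0)/t_0$. For $t_0\le t\le s$ the non-increase gives $p(s)\le p(t)\,s/t$, hence
\begin{displaymath}
  \bigl(p(s)-\delta s\bigr)-\bigl(p(t)-\delta t\bigr)
  \le (s-t)\bigl(p(t)/t-\delta\bigr)
  \le (s-t)(M-\delta),
\end{displaymath}
so any $\delta\ge M$ makes $p(t)-\delta t$ non-increasing on $[t_0,\infty)$.

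The main (minor) obstacle is simply noticing in (b) that the argument $ct+\delta p(t)$ has to be tamed to a multiple of $t$ before $\sO$-regular variation is applicable; part (a) provides exactly this tool, so the three statements are naturally proved in the order (a), (b), (c).
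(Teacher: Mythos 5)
Your proof is correct and follows essentially the same route as the paper: (a) from the monotonicity of $t^{-1}p(t)$, (b) by using (a) to replace $ct+\delta p(t)$ with a constant multiple of $t$ before invoking \eqref{ORV_inf}, and (c) from the monotonicity of $t^{-1}p(t)$ again (the paper writes $p(t)-\delta t=-t(\delta-t^{-1}p(t))$ as a product of non-decreasing positive factors, which is just a slicker packaging of your two-point comparison).
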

\begin{proof}
  (a) follows from the fact that $p(t)/t$ is non-increasing 
  due to the positivity of $p$.
  By (a), taking into account that $p(t)$ is non-decreasing, 
  \begin{displaymath}
    \liminf_{t\to\infty}\frac{p(t)}{p(ct+\delta p(t))}\ge
    \liminf_{t\to\infty}\frac{p(t)}{p(ct+\delta Mt)}
  \end{displaymath}
  with some $M>0$, the right-hand side being positive due to
  (\ref{ORV_inf}).
  
  Since $p(t)/t$ is non-increasing, $\delta-t^{-1}p(t)$ is positive
  and non-decreasing in $t$ for large $\delta$ and $t$. Hence,
  \begin{displaymath}
    p(t)-\delta t=-t(\delta-t^{-1}p(t))
  \end{displaymath}
  does not increase.
\end{proof}

Next, we show that the asymptotic behaviour of $S_\n$ given by
(\ref{hyp}) is inherited by the interpolated sums. 

\begin{lemma}
  \label{lem:int-sums}
  If (\ref{hyp}) holds, then 
  \begin{equation}
    \label{hyp_x}
    S_\x-\mu|\x|=\ssO(p(|\x|))\quad\text{a.s. as }\; \x\to\infty.
  \end{equation}
\end{lemma}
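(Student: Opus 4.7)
The plan is to exploit the exact multilinear interpolation of the monomial $|\x|$ (observed just after \eqref{int}), which turns the target into the identity
\[
S_\x - \mu|\x| \;=\; \sum_{\k\in C_\x} v_\k(\x)\bigl(S_{\k^\ast}-\mu|\k^\ast|\bigr),
\]
so that each summand can be controlled through the hypothesis \eqref{hyp} combined with the regularity of $p$ from Lemma~\ref{lem}. The principal obstacle is that $|\k^\ast|$ may dramatically exceed $|\x|$ whenever $\x$ has a small coordinate (indeed $|\x|$ need not even tend to infinity as $\x\to\infty$), so a naive vertex-by-vertex bound is too weak; the proof must expose a compensating smallness of the weights $v_\k(\x)$ on the offending vertices.

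After disposing of the trivial case where some $x^j=0$ (both $S_\x$ and $|\x|$ then vanish), I would partition $\{1,\dots,d\}$ into $I=\{i:x^i\geq 1\}$ and $J=\{j:x^j<1\}$; since $\x\to\infty$ forces $\max_{i\in I}x^i\to\infty$, the set $I$ is eventually nonempty. Vertices $\k^\ast$ with a vanishing coordinate contribute $0$ (both $S_{\k^\ast}$ and $|\k^\ast|$ are then $0$), so attention restricts to $\k^\ast\in\NN^d\cap C_\x$; for these, $(k^\ast)^j=1$ for each $j\in J$ and $(k^\ast)^i\in\{\lfloor x^i\rfloor,\lfloor x^i\rfloor+1\}$ for $i\in I$, giving $\max_i(k^\ast)^i\to\infty$. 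Hence the hypothesis furnishes, for any prescribed $\eps>0$ and all $\x$ sufficiently large, the uniform bound $|S_{\k^\ast}-\mu|\k^\ast||\leq \eps\,p(|\k^\ast|)$ at every relevant vertex.

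Writing $P_J=\prod_{j\in J}x^j\leq 1$ and $P_I=\prod_{i\in I}x^i=|\x|/P_J$, the product structure $v_\k(\x)=\prod_\ell|\k^\ell-x^\ell|$ together with the factorization in each coordinate $i\in I$ yields the key identities
\[
\sum_{\k^\ast\in\NN^d\cap C_\x}v_\k(\x)=P_J,\qquad |\k^\ast|\in\bigl[2^{-d}P_I,\,2^{d}P_I\bigr],
\]
the latter because $(k^\ast)^i/x^i\in[1/2,2]$ whenever $x^i\geq 1$. The $\sO$-regular variation of $p$ then provides a constant $M$ with $p(|\k^\ast|)\leq M\,p(P_I)$, so
\[
|S_\x-\mu|\x||\;\leq\;\eps\,M\,P_J\,p(|\x|/P_J).
\]

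To close, I would invoke the monotonicity of $t^{-1}p(t)$ from Lemma~\ref{lem}: since $P_J\leq 1$ implies $|\x|/P_J\geq|\x|$, we get $p(|\x|/P_J)/(|\x|/P_J)\leq p(|\x|)/|\x|$, which rearranges to $P_J\,p(|\x|/P_J)\leq p(|\x|)$. Hence $|S_\x-\mu|\x||\leq \eps M p(|\x|)$, and sending $\eps\downarrow 0$ as $\x\to\infty$ yields \eqref{hyp_x}. The heart of the argument is the exact compensation between the shrinking total weight $P_J$ on the non-degenerate vertices and the inflated argument $|\k^\ast|\asymp P_I=|\x|/P_J$, mediated by the monotonicity of $p(t)/t$.
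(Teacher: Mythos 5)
Your proof is correct. It rests on the same two pillars as the paper's --- the exact multilinear interpolation of $|\x|$ and the monotonicity of $p(t)/t$ --- but the execution is genuinely different. The paper never partitions the coordinates: it works vertex by vertex, writing
\begin{displaymath}
\frac{|S_\x-\mu|\x||}{p(|\x|)}
\le\sum_{\k}\frac{v_\k(\x)\,|S_{\k^\ast}-\mu|\k^\ast||}{p\bigl(v_\k(\x)|\k^\ast|\bigr)}\cdot
\frac{p\bigl(v_\k(\x)|\k^\ast|\bigr)}{p\bigl(\sum_\k v_\k(\x)|\k^\ast|\bigr)}
\le\sum_{\k}\frac{|S_{\k^\ast}-\mu|\k^\ast||}{p(|\k^\ast|)},
\end{displaymath}
where the first reduction uses only that $p$ is non-decreasing (each $v_\k(\x)|\k^\ast|$ is at most the sum $|\x|$) and the second only that $p(t)/t$ is non-increasing (applied to the pair $v_\k(\x)|\k^\ast|\le|\k^\ast|$, since $v_\k(\x)\le1$). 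Thus the compensation between inflated $|\k^\ast|$ and small weight happens locally at each vertex, and neither the $I/J$ split, nor the identity $\sum v_\k(\x)=P_J$, nor $\sO$-regular variation is needed. Your aggregate version instead pools the weights into $P_J$ and pays for the $2^{\pm d}$ distortion between $|\k^\ast|$ and $P_I$ with the $\sO$-regular-variation constant $M$; that hypothesis is standing in Theorem~\ref{inv_th}, so nothing is lost, and your computation of the total weight on the non-degenerate vertices is a nice structural observation --- but the paper's per-vertex telescoping is more economical and isolates exactly which properties of $p$ are used at each step.
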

\begin{proof}
  Being multi-linear itself, $|\x|$ can be exactly recovered by
  \begin{equation}
    \label{int_abs}
    |\x|=\sum_{\k\in C_\x}v_\k(\x)\,|\k^\ast|,\quad \x\in\R_+^d.
  \end{equation}
  Let $\widetilde C_\x=\{\k\in C_\x:|\k^\ast|\ne0\}$. By (\ref{int}),
  (\ref{int_abs}), and monotonicity of $p$, we have for all $\x\in\R_+^d$
  \begin{align*}
    \frac{|S_\x-\mu|\x||}{p(|\x|)}&\le
    \frac{\sum_{\k\in \widetilde C_\x}
      v_\k(\x)|S_{\k^\ast}-\mu|\k^\ast||}
    {p(\sum_{\k\in \widetilde C_\x}v_\k(\x)\,|\k^\ast|)}
    \\
    &=\sum_{\k\in \widetilde C_\x}
    \frac{v_\k(\x)|S_{\k^\ast}-\mu|\k^\ast||}
    {p(v_\k(\x)\,|\k^\ast|)}\;
    \frac{p(v_\k(\x)\,|\k^\ast|)}
    {p(\sum_{\k\in \widetilde C_\x}v_\k(\x)\,|\k^\ast|)}\\
    &\le
    \sum_{\k\in \widetilde C_\x}
    \frac{v_\k(\x)|S_{\k^\ast}-\mu|\k^\ast||}
    {p(v_\k(\x)\,|\k^\ast|)}\\ 
    &=
    \sum_{\k\in \widetilde C_\x}
    \frac{|S_{\k^\ast}-\mu|\k^\ast||}
    {p(|\k^\ast|)}\;\frac{p(|\k^\ast|)}
    {|\k^\ast|}\;\frac{v_\k(\x)\,|\k^\ast|}
    {p(v_\k(\x)\,|\k^\ast|)}.
  \end{align*}
  Since $p(t)/t$ is non-increasing, 
  \begin{equation*}
    \frac{|S_\x-\mu|\x||}{p(|\x|)}
    \le\sum_{\k\in \widetilde C_\x}
    \frac{|S_{\k^\ast}-\mu|\k^\ast||}{p(|\k^\ast|)},
  \end{equation*}
  and so (\ref{hyp}) implies (\ref{hyp_x}).
\end{proof}

\begin{proof}[Proof of Theorem~\ref{inv_th}]
  Assume that the left-hand inclusion in (\ref{incl}) does not hold,
  that is, there are sequences $\{\x_i,i\geq1\}$ and $\{t_i,i\geq1\}$
  with $t_i\to\infty$, such that $\x_i\in\sH(\eps p(t_i)t_i^{-1})$ and
  $\x_i\notin t_i^{-1/d}\sM_{t_i}$ for all $i$.  Denoting
  $\y_i=t_i^{1/d}\x_i$, we may write the former inclusion as
  $|\y_i|\ge\mu^{-1}t_i+\eps p(t_i)$ and the latter one as
  $S_{\y_i}<t_i$. The first inequality implies $\y_i\to\infty$.
  Hence,
  \begin{align*}
    \alpha_i&=\frac{S_{\y_i}-\mu|\y_i|}{p(|\y_i|)}=
    \frac{|\y_i|}{p(|\y_i|)}\left(\frac{S_{\y_i}}{|\y_i|}-\mu\right)\\
    &<\frac{|\y_i|}{p(|\y_i|)}\left(\frac{t_i}{\mu^{-1}t_i+\eps p(t_i)}-\mu\right)
    =-\;\frac{|\y_i|}{p(|\y_i|)}\frac{\eps\mu p(t_i)}
    {\mu^{-1}t_i+\eps p(t_i)}.
  \end{align*}
  Since $p(t)/t$ is non-increasing, 
  \begin{equation}
    \label{inq}
    \alpha_i<-\;\frac{\mu^{-1}t_i+\eps p(t_i)}{p(\mu^{-1}t_i+\eps p(t_i))}\;
    \frac{\eps\mu p(t_i)}{\mu^{-1}t_i+\eps p(t_i)}
    =-\;\frac{\eps\mu p(t_i)}{p(\mu^{-1}t_i+\eps p(t_i))}.
  \end{equation}
  Note that $\alpha_i\to 0$ as $i\to\infty$ by (\ref{hyp_x}), whereas
  the negative right-hand side of (\ref{inq}) is bounded away from
  zero by Lemma~\ref{lem}(b). This contradiction proves
  the left-hand inclusion in (\ref{incl}).

  Assume that the right-hand inclusion in (\ref{incl}) does not hold,
  so that there exist sequences $\{\x_i,i\geq1\}$ and
  $\{t_i,i\geq1\}$ with $t_i\to\infty$ such that $
  |\y_i|<\mu^{-1}t_i-\eps p(t_i)$ and $S_{\y_i}\ge t_i$ for all $i$,
  where $\y_i=t_i^{1/d}\x_i$. Therefore, $S_{\y_i}\to\infty$, which
  easily leads to $\y_i\to\infty$ by (\ref{int}). 
  By Lemma~\ref{lem}(c), 
  \begin{equation}
    \label{y_i2}
    |\y_i|<\mu^{-1}S_{\y_i}-\eps p(S_{\y_i})
  \end{equation}
  for large $i$ and sufficiently small $\eps>0$ (that may be smaller
  than the first chosen $\eps$). Using the above definition of
  $\alpha_i$, we get
  \begin{align*}
    \alpha_i&=
    \frac{|\y_i|}{p(|\y_i|)}\left(\frac{S_{\y_i}}{|\y_i|}-\mu\right)\\
    &>\frac{|\y_i|}{p(|\y_i|)}\left(\frac{S_{\y_i}}{\mu^{-1}S_{\y_i}-
      \eps p(S_{\y_i})}-\mu\right)
    =\frac{|\y_i|}{\mu^{-1}S_{\y_i}-\eps p(S_{\y_i})}\;
    \frac{\eps\mu p(S_{\y_i})}{p(|\y_i|)}.
  \end{align*}
  By (\ref{y_i2}) and taking into account the monotonicity of $p$, we have
  \begin{equation}
    \label{ineq}
    \alpha_i>\frac{\mu|\y_i|}{S_{\y_i}}\;\frac{\eps\mu p(S_{\y_i})}
    {p(\mu^{-1}S_{\y_i}-\eps p(S_{\y_i}))}\ge\frac{\mu|\y_i|}{S_{\y_i}}
    \;\frac{\eps\mu p(S_{\y_i})}
    {p(\mu^{-1}S_{\y_i})}.
  \end{equation}

  Note that
  \begin{equation}
    \label{hyp_x2}
    S_\y-\mu|\y|=\ssO(|\y|)\quad\text{a.s. as }\;\y\to\infty.
  \end{equation}
  This is not a straightforward consequence of (\ref{hyp_x}) and 
  Lemma~\ref{lem}(a), since $\y\to\infty$ need not imply
  $|\y|\to\infty$ (which is possible if $\y\to\infty$ while getting
  simultaneously closer to one of the coordinate planes). However,
  (\ref{hyp_x2}) may be proved in an alternative way: (\ref{hyp}) and
  (a) lead to $S_\n-\mu|\n|=\ssO(|\n|)$ a.s.\
  as $\n\to\infty$ in $\NN^d$ (which is now equivalent to
  $|\n|\to\infty$), and the latter in turn implies (\ref{hyp_x2}) in
  the same manner as (\ref{hyp}) implies (\ref{hyp_x}).

  So, by (\ref{hyp_x2})
  \begin{equation}
    \label{KSLLN}
    \frac{\mu|\y_i|}{S_{\y_i}}\to 1\quad\text{a.s. as }\; i\to\infty.
  \end{equation}
  At the same time, the second factor on the right-hand side of
  (\ref{ineq}) is bounded away from zero as $i\to\infty$ due to
  (\ref{ORV_inf}). This contradicts $\alpha_i\to 0$ and so
  proves the right-hand inclusion in~(\ref{incl}).
\end{proof}

The following results give bounds on the Hausdorff and the symmetric
difference distances between the sets $\sH(c)$ for different $c$'s.

\begin{lemma}
  \label{rhoH_lem}
  If $-\mu^{-1}<c_1\le c_2$, then
  \begin{equation}
    \label{rhoHH}
    \rhoH(\sH(c_1),\sH(c_2))=\sqrt d\,
    ((\mu^{-1}+c_2)^{1/d}-(\mu^{-1}+c_1)^{1/d}).
  \end{equation}
  If $c_1,c_2\to 0$, then
  \begin{equation}
    \label{rho_H_lim}
    \rhoH(\sH(c_1),\sH(c_2))
    =d^{-1/2}\mu^{1-1/d}(c_2-c_1)+\ssO(c_2-c_1).
  \end{equation}
\end{lemma}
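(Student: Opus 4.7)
The plan is to exploit the superadditivity of the geometric mean, i.e.\ the classical inequality
\[
\Bigl(\prod_{i=1}^d (a_i+b_i)\Bigr)^{1/d} \ge \Bigl(\prod_{i=1}^d a_i\Bigr)^{1/d} + \Bigl(\prod_{i=1}^d b_i\Bigr)^{1/d}, \qquad a_i,b_i\ge 0,
\]
combined with the symmetry of the level sets $\partial\sH(c)$. Abbreviate $r=(\mu^{-1}+c_1)^{1/d}$ and $r'=(\mu^{-1}+c_2)^{1/d}$. Since $c_1\le c_2$, one has $\sH(c_2)\subset\sH(c_1)$, so the Hausdorff distance reduces to
\[
\rhoH(\sH(c_1),\sH(c_2))=\sup_{x\in\sH(c_1)}\inf_{y\in\sH(c_2)}\rho(x,y),
\]
and the first claim becomes $\rhoH(\sH(c_1),\sH(c_2))=\sqrt{d}\,(r'-r)$.

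For the upper bound, given any $x\in\sH(c_1)$ I would take $y=x+h\1$ with $h=r'-r\ge 0$. Applying the inequality with $a_i=x^i$ and $b_i=h$ yields $|y|^{1/d}\ge|x|^{1/d}+h\ge r+h=r'$, hence $y\in\sH(c_2)$. As $\rho(x,y)=\sqrt{d}\,h$, this gives $\rhoH(\sH(c_1),\sH(c_2))\le\sqrt{d}\,(r'-r)$.

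For the matching lower bound, I would pick the diagonal witness $x_0=r\1\in\partial\sH(c_1)$. For any $y\in\sH(c_2)$, AM--GM gives $d^{-1}\sum_i y^i\ge(\prod_i y^i)^{1/d}\ge r'$, so $\sum_i(y^i-r)\ge d(r'-r)\ge 0$. Cauchy--Schwarz then yields
\[
\rho(x_0,y)=\Bigl(\sum_i(y^i-r)^2\Bigr)^{1/2}\ge d^{-1/2}\sum_i(y^i-r)\ge\sqrt{d}\,(r'-r).
\]
Hence $\inf_{y\in\sH(c_2)}\rho(x_0,y)\ge\sqrt{d}\,(r'-r)$, which together with the upper bound establishes \eqref{rhoHH}.

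The asymptotic \eqref{rho_H_lim} then follows from \eqref{rhoHH} by a routine Taylor expansion of $c\mapsto(\mu^{-1}+c)^{1/d}$ at $c=0$: its derivative at the origin equals $d^{-1}\mu^{1-1/d}$, so $r'-r=d^{-1}\mu^{1-1/d}(c_2-c_1)+\ssO(c_2-c_1)$, and multiplying by $\sqrt{d}$ produces the stated leading coefficient $d^{-1/2}\mu^{1-1/d}$. No step presents a genuine obstacle; the only point requiring care is choosing the correct symmetric witness $x_0=r\1$ for the lower bound, since for highly asymmetric $x\in\partial\sH(c_1)$ the distance $\inf_{y\in\sH(c_2)}\rho(x,y)$ is strictly smaller than $\sqrt{d}\,(r'-r)$ (one can move $\sH(c_2)$-ward along a single large coordinate), and the sup is attained only on the diagonal.
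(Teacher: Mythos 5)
Your proof is correct, and it takes a genuinely different route from the paper. The paper's argument is convex-geometric: it computes $\inf\{\langle u,x\rangle:\;x\in\sH(c)\}=d(c+\mu^{-1})^{1/d}|u|^{1/d}$ for $u\in\Sphere\cap\R_+^d$ (an elementary constrained minimisation, essentially the same AM--GM step you use for your lower bound), then invokes the identity between the Hausdorff distance of convex sets and the uniform distance of their support functions, and finally maximises over directions using $|u|\le d^{-d/2}$ on the sphere. Your argument replaces the support-function identity by two explicit estimates: Mahler's superadditivity of the geometric mean produces, for every $x\in\sH(c_1)$, the concrete nearby point $x+(r'-r)\1\in\sH(c_2)$ (upper bound), and AM--GM plus Cauchy--Schwarz at the diagonal witness $r\1$ gives the matching lower bound. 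What the paper's route buys is brevity and a reusable formula for the support function (the extremal direction $u=d^{-1/2}\1$ falls out of the maximisation); what yours buys is self-containedness — in particular it sidesteps the point that the support-function characterisation of $\rhoH$ is usually stated for compact convex sets, whereas the $\sH(c)$ are unbounded, so the paper's appeal to that identity requires a word of justification (the two sets share the recession cone $\R_+^d$) that your direct argument simply does not need. Your observation that $\rhoH(\sH(c_1),\sH(c_2))$ reduces to the one-sided excess because $\sH(c_2)\subset\sH(c_1)$, and that the supremum is attained only on the diagonal, is consistent with the paper's later use of this fact in the proof of Theorem~\ref{met_LIL_th}. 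The Taylor-expansion step for \eqref{rho_H_lim} is the same in both treatments.
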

\begin{proof}
  An elementary minimisation argument yields that 
  \begin{displaymath}
    \inf\{\langle u,x\rangle:\; x\in\sH(c)\}=d(c+\mu^{-1})^{1/d}|u|^{1/d}
  \end{displaymath}
  for all $u\in\Sphere\cap\R_+^d$. The above expression yields the
  negative of the support function of $\sH(c)$ in direction
  $(-u)$. Since the Hausdorff distance between convex sets $\sH(c_1)$
  and $\sH(c_2)$ equals the uniform distance between their support
  functions and the maximal value of $|u|$ is $d^{-d/2}$,
  \eqref{rhoHH} holds and easily yields (\ref{rho_H_lim}).
\end{proof}

\begin{lemma}
  \label{rhotr_lem}
  Let $\cone$ be a cone in $\Rp$. If $-\mu^{-1}<c_1\le c_2$, then
  \begin{equation}
    \label{rhotr_bound}
    \rho_\triangle^\cone(\sH(c_1),\sH(c_2))= 
    L_\cone(c_2-c_1),
  \end{equation}
  where $L_\cone$ is given by \eqref{LK}. 
\end{lemma}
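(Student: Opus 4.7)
The plan is to compute $\lambda_d(\cone \cap (\sH(c_1) \triangle \sH(c_2)))$ directly by passing to spherical coordinates. Since $\sH(c)$ is monotone decreasing in $c$, the assumption $c_1 \le c_2$ gives $\sH(c_2) \subset \sH(c_1)$, so the symmetric difference reduces to the annular slab
\begin{displaymath}
\cone \cap \{x \in \R_+^d : \mu^{-1}+c_1 \le |x| < \mu^{-1}+c_2\}.
\end{displaymath}

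Next I would write each $x$ in this domain as $x = ru$ with $r > 0$ and $u \in \Sphere$ (the unit Euclidean sphere), which gives the Lebesgue element $r^{d-1}\,dr\,du$. Because $\cone$ is a cone, the constraint $x \in \cone$ becomes $u \in \cone \cap \Sphere$. The key observation is that the coordinate product $|x| = x^1 \cdots x^d$ is $d$-homogeneous, so $|ru| = r^d|u|$; hence for each fixed $u \in \cone \cap \Sphere$, which lies in $\Rp$ by \eqref{eq:cone-sub} so that $|u| > 0$, the radial condition becomes
\begin{displaymath}
r \in \bigl[((\mu^{-1}+c_1)/|u|)^{1/d},\, ((\mu^{-1}+c_2)/|u|)^{1/d}\bigr).
\end{displaymath}
The inner radial integral of $r^{d-1}$ over this interval collapses by the substitution $s = r^d$ to $(c_2 - c_1)/(d|u|)$, and integrating this over $u \in \cone \cap \Sphere$ reproduces exactly the defining formula \eqref{LK} for $L_\cone$, delivering the claimed value $L_\cone(c_2 - c_1)$.

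I do not anticipate a substantive obstacle; the argument is essentially a Fubini-type unwinding. The mild points meriting a line of justification are that $|u|$ is continuous and strictly positive on the compact set $\cone \cap \Sphere$ (thanks to $\cone \setminus \{0\} \subset \Rp$), so $|u|^{-1}$ is bounded there and Fubini is unproblematic, and that the assumption $c_1 > -\mu^{-1}$ is used to ensure the radial bounds are positive so that $\sH(c_1)$ and $\sH(c_2)$ both lie in $\Rp$.
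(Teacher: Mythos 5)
Your proof is correct and follows exactly the paper's argument: the paper likewise writes $\cone\cap(\sH(c_1)\setminus\sH(c_2))$ in spherical coordinates with radial bounds $b_i(u)=((\mu^{-1}+c_i)/|u|)^{1/d}$ and evaluates $\int_{b_1(u)}^{b_2(u)}r^{d-1}\,\dif r=(c_2-c_1)/(d|u|)$ before integrating over $\cone\cap\Sphere$. No differences worth noting.
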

\begin{proof}
  Put $b_i(u)=(\mu^{-1}+c_i)^{1/d}|u|^{-1/d}$, $i=1,2$.
  Equation~\eqref{rhotr_bound} easily follows by representing
  $\cone\cap(\sH(c_1)\setminus\sH(c_2))$ in the spherical coordinates:
  \begin{equation*}
  \rho_\triangle^\cone(\sH(c_1),\sH(c_2))=\int_{\cone\cap\Sphere}
  \bigg(\int_{b_1(u)}^{b_2(u)}r^{d-1}\,\dif r\bigg)\,\dif u
  =\frac{c_2-c_1}d\int_{\cone\cap\Sphere}|u|^{-1}\,\dif u.\qedhere
  \end{equation*}
\end{proof}

\begin{proof}[Proof of Theorem \ref{met_SLLN_th}] 
  By Corollary \ref{set_SLLN_cor}, \eqref{rhoHH} and (\ref{rho_H_lim}), 
  \begin{displaymath}
    \rhoH(t^{-1/d}\sM_t,\sH) \leq \sqrt{d}((\mu^{-1}+\eps
    t^{-1+1/\beta})^{1/d} -(\mu^{-1}-\eps t^{-1+1/\beta})^{1/d})
    =\eps\sO(t^{-1+1/\beta}).
  \end{displaymath}
  A similar bound for the symmetric difference metric follows from
  \eqref{rhotr_bound}.  Since $\eps$ can be chosen arbitrary small,
  (\ref{H_bound}) and (\ref{D_bound}) follow.  If $K\subset\Rp$, then
  $K$ is a subset of a cone $\cone$ with
  $\cone\setminus\{0\}\subset\Rp$, so that
  Corollary~\ref{cor:sector-SLLN} applies.
\end{proof}

\section{Proofs for results in Section~\ref{sec:law-iter-logar}}
\label{sec:proofs-results-sect-2}

It suffices to assume that $\sigma=1$.  To simplify the notation, let
\begin{equation}
  \label{p}
  \chi(t)=\sqrt{2t\log\log t}=t\fnc(t)
\end{equation}
for $t\ge e^e$, and extend both $\chi$ and $\fnc$ to $[0,\infty)$ and
$(0,\infty)$, respectively, so that $\chi$ becomes positive and
concave.

It follows from the law of the iterated logarithm for multi-indexed
sums due to Wichura \cite[Th.~5]{Wich}, see also
\cite[Th.~10.9]{KlesBook}, that, under \eqref{Wich_cond},
\begin{equation*}
  \limsup_{\n\to\infty}\frac{|S_\n-\mu|\n||}{\chi(|\n|)}=\sqrt{d} 
  \quad\text{a.s.}
\end{equation*}
Hence,
\begin{equation}
  \label{ord_LIL_x}
  \limsup_{\x\to\infty}\frac{|S_\x-\mu|\x||}{\chi(|\x|)}=\sqrt{d}
  \quad\text{a.s.}
\end{equation}
Indeed, 
\begin{align*}
  \frac{|S_\x-\mu|\x||}{\chi(|\x|)}&\le
  \frac{\sum_{\k\in C_\x}
    v_\k(\x)|S_{\k^\ast}-\mu|\k^\ast||}
  {\chi(\sum_{\k\in C_\x}v_\k(\x)\,|\k^\ast|)}\\
  &\le\frac{\sum_{\k\in C_\x}	v_\k(\x)|S_{\k^\ast}-\mu|\k^\ast||}
  {\sum_{\k\in C_\x}v_\k(\x)\,\chi(|\k^\ast|)}\le
  \max_{\k\in C_\x}
  \frac{|S_{\k^\ast}-\mu|\k^\ast||}
  {\chi(|\k^\ast|)},
\end{align*}
where the second inequality relies on the concavity of $\chi$.  The
same argument applied to the sectorial version of the LIL proved in
Theorem~\ref{S_set_SLLN_lem} leads to
\begin{equation}
  \label{sect_LIL_x}
  \limsup_{\cone\ni\x\to\infty}\frac{|S_\x-\mu|\x||}{\chi(|\x|)}=1
  \quad\text{a.s.}
\end{equation}

\begin{proof}[Proof of Theorem \ref{LIL}]
  First, we prove the inclusion in (i). Taking (\ref{Hast}) into
  account, we actually need to show that 
  \begin{equation}
    \label{incls1}
    t^{-1/d}\sM_t\subset\sH(\gamma\sqrt{d}\fnc(t))
  \end{equation}
  and 
  \begin{equation}
    \label{incls2}
    t^{-1/d}(\cone\cap\sM_t)\subset
    (\cone\,\cap\,\sH(\gamma\fnc(t))
  \end{equation}
  almost surely for all sufficiently large $t$.

  In order to derive (\ref{incls1}), we assume the contrary and
  consider the sequences $\{\y_i,i\geq1\}$ and
  $\{t_i,i\geq1\}$ with $\y_i,t_i\to\infty$ such that 
  \begin{displaymath}
    |\y_i|<\mu^{-1}t_i+\gamma \sqrt{d}\chi(t_i)
  \end{displaymath}
  and $S_{\y_i}\ge t_i$ for all $i$. Along the
  lines of the proof of Theorem~\ref{inv_th} (with $-\gamma$ instead
  of $\eps$ and $\sqrt d\chi(\cdot)$ instead of $p(\cdot)$), we arrive
  at an analogue of inequality (\ref{ineq}):
  \begin{equation*}
    \alpha_i=\frac{S_{\y_i}-\mu|\y_i|}{\sqrt{d}\chi(|\y_i|)}
    >-\;\frac{\mu|\y_i|}{S_{\y_i}}
    \;\frac{\gamma\mu \chi(S_{\y_i})}
    {\chi(\mu^{-1}S_{\y_i})}.
  \end{equation*} 

  Passing to the upper limit, by (\ref{ord_LIL_x}), (\ref{KSLLN}), and
  (\ref{p}) we arrive at the contradiction
  \begin{equation*}
    1\ge\limsup_{i\to\infty}\alpha_i\ge-\gamma\mu^{3/2}>1.
  \end{equation*}
  The same argument with $\y_i\in\cone$ and a reference to
  \eqref{sect_LIL_x} leads to~(\ref{incls2}).

  The inclusion in (iii) may be deduced in a similar manner by means of
  (\ref{inq}) instead of (\ref{ineq}) and $\liminf$ instead of
  $\limsup$.

  Let us now turn to the proof of (ii). Since $\sH_\cone(c)$ decreases
  in $c$, it suffices to prove that (\ref{notincl1}) and
  (\ref{notincl2}) hold with $\gamma=-\mu^{-3/2}$ and
  $\gamma=\mu^{-3/2}$, respectively. It will be shown that
  ``exceptional'' points which violate these inclusions may be found
  on the diagonal
  \begin{displaymath}
    \sD=\{\x\in\Rp:\; x^1=\cdots=x^d\}.
  \end{displaymath}
  This, however, requires a more delicate analysis.
  Introduce the sequence of diagonal integer points
  \begin{displaymath}
    \sD\ni\z_i=i\cdot\1=(i,\dots,i),\qquad i\geq1,
  \end{displaymath}
  and a (one-dimensional) sequence $\{\eta_j,j\geq1\}$ of independent
  copies of $\xi$. For $i\geq1$, denote $\tilde
  S_i=\sum_{j=1}^i\eta_j$.  By \cite[Th.~1.1]{Berkes} (see also (1.14)
  ibid.), it may be easily checked that, under assumption
  \begin{equation}
    \label{BWC}
    \E(\xi^2\log\log|\xi|)<\infty,
  \end{equation}
  which holds by (\ref{Wich_cond}),
  \begin{equation}
    \label{q}
    q(t)=\sqrt{2t(\log\log t+1)},\quad t\ge0,
  \end{equation}
  is a lower function for $\{\tilde S_{i^d},i\geq1\}$. Hence,
  $q$ is a lower function
  for the sequence $\{S_{\z_i},i\geq1\}$, which has the same
  distribution. In other words, each of the inequalities
  \begin{equation}
    \label{io}
    S_{\z_i}\le\mu|\z_i|-q(|\z_i|),
    \quad S_{\z_i}\ge\mu|\z_i|+q(|\z_i|)
  \end{equation}
  holds infinitely often with probability one.

  In order to prove the claim, it suffices to find (random) sequences
  $\{t'_i,i\geq1\}$ and $\{t''_i,i\geq1\}$ such that
  $t'_i,t''_i\to\infty$ a.s., and for large $i$ a.s.
  \begin{align*}
    (\sD\cap\sM_{t_i'})&\not\subset\sD\cap
    \left\{\y\in\Rp:\;|\y|\ge\mu^{-1}t_i'-
    \mu^{-3/2}\chi(t_i')\right\},\\
    (\sD\cap\sM_{t_i''})&\not\supset\sD\cap
    \left\{\y\in\Rp:\;|\y|\ge\mu^{-1}t_i''+
    \mu^{-3/2}\chi(t_i'')\right\}.
  \end{align*}
  Following (\ref{io}), we introduce (random) sequences of indices
  $\{\z'_i,i\geq1\}$ and $\{\z''_i,i\geq1\}$ such that
  $\z'_i,\z''_i\in\NN^d\cap\sD$, $\z'_i,\z''_i\to\infty$ a.s., and
  \begin{displaymath}
    S_{\z'_i}\ge\mu|\z'_i|+q(|\z'_i|),\qquad
    S_{\z''_i}\le\mu|\z''_i|- q(|\z''_i|)
  \end{displaymath}
  almost surely for all sufficiently large $i$.
  Letting $t'_i=S_{\z'_i}$ and $t''_i=S_{\z''_i}+1$ yields that
  $\z'_i\in\sM_{t'_i}$ and $\z''_i\notin\sM_{t''_i}$. Hence, we
  actually need to prove that the implications 
  \begin{align}
    \label{impl2}
    S_{\z'_i}\ge\mu|\z'_i|+q(|\z'_i|)&\Rightarrow|\z'_i|<
    \mu^{-1}S_{\z'_i}-\mu^{-3/2}\chi(S_{\z'_i}),\\
    \label{impl3}
    S_{\z''_i}\le\mu|\z''_i|-q(|\z''_i|)&\Rightarrow|\z''_i|\ge
    \mu^{-1}(S_{\z''_i}+1)+\mu^{-3/2}\chi(S_{\z''_i}+1)
  \end{align}
  hold a.s.~for all sufficiently large $i$.  Setting $\psi_-(u)=\mu
  u-q(u)$, $\psi_+(u)=\mu u+q(u)$, and denoting by
  $\psi_-^{\leftarrow}$ and $\psi_+^{\leftarrow}$ their inverses, we
  may write the left-hand inequalities in (\ref{impl2}) and
  (\ref{impl3}) as $|\z'_i|\le\psi_+^{\leftarrow}(S_{\z'_i})$ and
  $|\z''_i|\ge\psi_-^{\leftarrow}(S_{\z''_i})$. Thus, it suffices to
  show that the inequalities
  \begin{align*}
    \psi_+^{\leftarrow}(u)&<\mu^{-1}u-\mu^{-3/2}\chi(u),\\
    \psi_-^{\leftarrow}(u)&\ge\mu^{-1}(u+1)+\mu^{-3/2}\chi(u+1)
  \end{align*}
  hold for large $u$. A straightforward calculation yields that
  these inequalities actually mean
  \begin{align*}
    q(\mu^{-1}u-\mu^{-3/2}\chi(u))&>\mu^{-1/2}\chi(u),\\
    q(\mu^{-1}(u+1)+\mu^{-3/2}\chi(u+1))&\ge
    \mu^{-1/2}\chi(u+1)+1.
  \end{align*}
  Routine but rather tedious calculations (which we do not detail
  here) show that the above inequalities indeed hold for large $u$
  with $\chi$ and $q$ defined by (\ref{p}) and (\ref{q}). This
  completes the proof of (ii) and of Theorem~\ref{LIL}.
\end{proof}

\begin{remark}
  \label{SectMomCond}
  The sectorial LIL proved in Theorem~\ref{sectLILlem} does not
  require Wichura's condition (\ref{Wich_cond}). Hence, all parts of
  the foregoing proof based only on sectorial arguments remain true
  without (\ref{Wich_cond}). This particularly applies to
  (\ref{incls2}) with $\gamma<-\mu^{-3/2}$ as well as to the reverse
  inclusion with $\gamma>\mu^{-3/2}$. For ease of reference, we
  reproduce them here in a slightly modified form
  \begin{displaymath}
  	\cone\cap\sH(\gamma\fnc(t))\subset
    t^{-1/d}(\cone\cap\sM_t)\subset \cone\cap\sH(-\gamma\fnc(t))
  \end{displaymath}
  a.s.~for $\gamma>\mu^{-3/2}$ and all sufficiently large $t$.
\end{remark}

\begin{proof}[Proof of Theorem \ref{met_LIL_th}]
  Fix $\gamma>\mu^{-3/2}$ and a closed convex cone $\cone$ with
  $\cone\setminus\{0\}\subset\Rp$. Denote for brevity 
  $\sH^\pm=\sH(\pm\gamma\fnc(t))$ and
  $\sH^\pm_\cone=\sH_\cone(\pm\gamma\fnc(t))$. 
  By (i) and (iii) in Theorem~\ref{LIL},
  \begin{equation}
    \label{M_incl}
    \sH^+_\cone\subset t^{-1/d}\sM_t\subset\sH^-_\cone
  \end{equation}
  almost surely for all sufficiently large $t$. Therefore, 
  \begin{displaymath}
    \rhoH(t^{-1/d}\sM_t,\sH)\le
    \max\{\rhoH(\sH,\sH^-_\cone),\rhoH(\sH,\sH^+_\cone)\}
  \end{displaymath}
  for all sufficiently large $t$.

  Without loss of generality, assume that $\cone$ is sufficiently
  large and contains the diagonal, so that
  $\rhoH(\sH,\sH^\pm_\cone)=\rhoH(\sH,\sH^\pm)$. By (\ref{rho_H_lim}),
  \begin{displaymath}
    \rhoH(t^{-1/d}\sM_t,\sH)\le
    d^{-1/2}\gamma\mu^{1-1/d}\fnc(t)+\ssO(\fnc(t))
    \quad \text{a.s.~as }\; t\to\infty.
  \end{displaymath}
  Dividing by $\fnc(t)$ and letting $\gamma\downarrow\mu^{-3/2}$
  yields the upper bound in (\ref{met_LIL_H}):
  \begin{displaymath}
    \limsup_{t\to\infty}\frac{\rhoH(t^{-1/d}\sM_t,\sH)}
    {\fnc(t)}\le d^{-1/2}\mu^{-1/2-1/d}\quad\text{a.s.}
  \end{displaymath}

  In order to obtain the reverse inequality, we notice that the
  sequences $\{\z_i',i\geq1\}$ and $\{t_i',i\geq1\}$ with
  $t_i'=S_{\z_i'}$ constructed in the final part of the proof of
  Theorem~\ref{LIL} a.s.~satisfy
  \begin{displaymath}
    (t_i')^{-1/d}\z_i'\in(t_i')^{-1/d}\sM_{t_i'},\quad 
    (t_i')^{-1/d}\z_i'\notin\sH(-\mu^{-3/2}\fnc(t_i'))
  \end{displaymath}
  for large $i$. Since the supremum in the definition of
  $\rhoH(\sH(c_1),\sH(c_2))$ is attained at a diagonal point,
  (\ref{rho_H_lim}) implies
  \begin{align*}
    \frac{\rhoH((t_i')^{-1/d}\sM_{t_i'},\sH)}{\fnc(t_i')}
    \ge\frac{\inf_{\y\in\sH}\rho((t_i')^{-1/d}\z_i',\y)}{\fnc(t_i')}
    &>\frac{\rhoH\left(\sH(-\mu^{-3/2}\fnc(t_i'),\sH\right)}{\fnc(t_i')}\\
    &=d^{-1/2}\mu^{-1/2-1/d}+\ssO(1)\quad \text{as }\; i\to\infty.
  \end{align*}
  Thus, we arrive at the lower bound in (\ref{met_LIL_H}):
  \begin{displaymath}
    \limsup_{t\to\infty}\frac{\rhoH(t^{-1/d}\sM_t,\sH)}{\fnc(t)}
    \ge d^{-1/2}\mu^{-1/2-1/d}\quad\text{a.s.}
  \end{displaymath}

  Let us now turn to the proof of (\ref{met_LIL_tr}). Consider an
  enlarged closed convex cone $\ekcone$ such that
  $\ekcone\setminus\{0\}\subset\Rp$ and whose interior contains
  $\kcone\setminus\{0\}$. Notice that
  \begin{displaymath}
  \rho_\triangle^\ekcone(\sH^-_\ekcone,\sH^+_\ekcone)=
  \rho_\triangle^\ekcone(\sH^-,\sH^+),
  \end{displaymath}
  since $\sH^\pm_\ekcone$ coincides with $\sH^\pm$ within $\ekcone$.  
  Hence, by (\ref{M_incl}) and (\ref{rhotr_bound}),
  \begin{equation}
    \label{rho_ineq}
    \rho_\triangle^K(t^{-1/d}\sM_t,\sH)\le
    \rho_\triangle^\ekcone(\sH^-,\sH^+)
    =2\gamma L_\ekcone\fnc(t) 
  \end{equation}
  almost surely for all sufficiently large $t$.
  Dividing by $\fnc(t)$ and letting first $t\to\infty$ and then
  $\gamma\downarrow\mu^{-3/2}$ and $\ekcone\downarrow\kcone$ yield
  (\ref{met_LIL_tr}).

  Let now $K\subset\Rp$. Choose a cone $\cone$, so that
  $K\subset\cone\setminus\{0\}\subset\Rp$. By Remark \ref{SectMomCond},
  \begin{equation}
  \label{cone_incl}
  K\cap\sH^+\subset K\cap(t^{-1/d}\sM_t)\subset K\cap\sH^-
  \end{equation}
  for all large $t$, provided only that $\E\xi^2<\infty$.
  The rest of the proof follows the lines of the preceding proof, but with reference to \eqref{cone_incl} instead of \eqref{M_incl}.
  
  Assume that $\xi$ is almost surely non-negative.  Then, with each
  $x$, the set $\sM_t$ contains also $ax$ for all $a\geq1$. Hence,
  reflecting the set $t^{-1/d}\sM_t\setminus\sH$ symmetrically with
  respect to $\partial\sH$ in the radial direction, we easily arrive
  at the counterpart of \eqref{rho_ineq}:
  \begin{align*}
    \rho_\triangle^K(t^{-1/d}\sM_t,\sH)\le
    \rho_\triangle^\ekcone(\sH,\sH^+)
    =\gamma L_\ekcone\fnc(t)
  \end{align*}
  almost surely for all sufficiently large $t$,
  and then the proof proceeds as above.  The case $K\subset\Rp$ is
  treated in the same way as before.
\end{proof}

\begin{proof}[Proof of Theorem \ref{liminf_th}] 
  Fix a sufficiently large closed convex cone $\cone$ such that
  $\cone\setminus\{0\}\subset\Rp$, and put $F=\cone\cap\Sphere$. For
  $l\in\NN$ and $c>0$, let
  \begin{displaymath}
    A_{l,c}=\{\omega:\; \cone\cap\sH(cl^{-1/2})\subset\cone\cap(l^{-1/d}\sM_l)
    \subset\cone\cap\sH(-cl^{-1/2})\}.
  \end{displaymath}
  The event $A_{l,c}$ means that, inside $\cone$, the boundary of
  $l^{-1/d}\sM_l$ lies within a relatively narrow strip
  $\mu^{-1}-cl^{-1/2}\le|\x|\le\mu^{-1}+cl^{-1/2}$.

  Let $R_{l,c}^\pm(u)$, $u\in F$, be the radial functions of $\sH(\pm
  cl^{-1/2})$, that is, 
  \begin{displaymath}
    R_{l,c}^\pm(u)=\inf\{a>0:\; au\in\sH(\pm cl^{-1/2})\}=
    \left(\frac{\mu^{-1}\pm cl^{-1/2}}{|u|}\right)^{1/d}.
  \end{displaymath}
  Thus,
  \begin{align*}
    A_{l,c}&=\{\omega:\; R_{l,c}^-(u)\le r_l(u)\le
    R_{l,c}^+(u),\;u\in F\}\\
    &\supset\{\omega\colon R_{l,c}^-(u)<r_l(u)\le R_{l,c}^+(u),\;u\in F\}
  \end{align*}
  and the latter event is identical to 
  \begin{equation}
    \label{Blc}
    B_{l,c}=\left\{\omega:\; 
      -\frac c{|u|}<\sqrt{l}\left((r_l(u))^d-\frac{1}{\mu|u|}\right)
      \le\frac c{|u|},\;u\in F\right\}.
  \end{equation}
  Since $\xi$ is a.s.~non-negative, $B_{l,c}$ can be represented in
  terms of interpolated sums as
  \begin{equation}
    \begin{aligned}
      \label{Blc_int}
      B_{l,c}=\Big\{\omega\colon &S_{l^{1/d}x}<l
      \text{ for all $x\in\cone$ with $|x|=\mu^{-1}-cl^{-1/2}$,}\\
      &S_{l^{1/d}x}\ge l\text{ for all $x\in\cone$ with $|x|=\mu^{-1}+cl^{-1/2}$}\Big\}.
    \end{aligned}
  \end{equation}
  By \eqref{Blc} and Theorem \ref{thr:radial},
  \begin{align*}
    \lim_{l\to\infty}\P(B_{l,c})
    &=\Prob{-c|u|^{-1}<\frac{\sigma}{\mu^{3/2}} |u|^{-1}Z_{u/|u|^{1/d}}\le
      c|u|^{-1},\; u\in F}\\
    &\ge\Prob{-\frac{c\mu^{3/2}}\sigma<Z_{u/|u|^{1/d}}<\frac{c\mu^{3/2}}\sigma,
      \;u\in F}.
  \end{align*}

  It follows from general results on Gaussian measures in Banach
  spaces that the probability on the right-hand side is positive for
  any $c>0$. For instance, this may be easily deduced from the
  infinite-dimensional Anderson inequality, see, e.g.,
  \cite[Cor.~7.1]{Lif}.  Hence, $\lim_{l\to\infty}\P(B_{l,c})>0$ for
  any $c>0$, and
  \begin{equation}
    \label{P_ineq}
    \Prob{B_{l,c}\text{ i.o.}}=
    \lim_{l\to\infty}\P(\cup_{i\ge l}B_{i,c})\ge
    \lim_{l\to\infty}\P(B_{l,c})>0,
  \end{equation}
  where i.o.~stands for ``infinitely often''.

  It follows from \eqref{Blc_int} that $B_{l,c}$ is measurable with
  respect to the $\sigma$-algebra generated by $S_{l^{1/d}x}$,
  $x\in\cone\cap\sH(-cl^{-1/2})$. So, the random event
  $\{B_{l,c}\text{ i.o.}\}$ is invariant under finite permutations of
  $\NN^d$. Let $e:\NN^d\mapsto\NN$ be the usual zigzag enumeration of
  $\NN^d$.  Applying the Hewitt-Savage $0$-$1$ law to the
  (one-dimensional) sequence $\{\xi_{e(m)},m\in\NN^d\}$ turns
  \eqref{P_ineq} into $\Prob{B_{l,c}\text{ i.o.}}=1$. Hence,
  $\Prob{A_{l,c}\text{ i.o.}}=1$.

  So, Lemmas~\ref{rhoH_lem} and~\ref{rhotr_lem} imply
  \begin{align}
    \label{infcone_1}
    \liminf_{t\to\infty}\sqrt t\,\rhoH(\cone\cap t^{-1/d}\sM_t,\cone\cap\sH)
    &\le 2cd^{-1/2}\mu^{1-1/d}\quad\text{a.s.},\\
    \label{infcone_2}
    \liminf_{t\to\infty}\sqrt t\,\rho_\triangle^K(\cone\cap t^{-1/d}\sM_t,
    \cone\cap \sH)&\le 2cL_\cone\quad\text{a.s.}
  \end{align}
  Under \eqref{Wich_cond}, it follows from \eqref{incls1} and the
  reverse inclusion that
  \begin{displaymath}
    \sH(\gamma\sqrt{d}\fnc(t))\subset t^{-1/d}\sM_t\subset\sH(-\gamma\sqrt{d}\fnc(t))
  \end{displaymath}
  holds for any $\gamma>\mu^{-3/2}$ and all large $t$. By choosing a
  sufficiently large cone $\cone$, we can make
  $\sH(\gamma\sqrt{d}\fnc(t))$ and $\sH(-\gamma\sqrt{d}\fnc(t))$
  arbitrarily close to each other outside $\cone$. Hence,
  $\liminf_{t\to\infty}\sqrt t\,\rhoH(t^{-1/d}\sM_t,\sH)$ is
  determined by the left-hand side of \eqref{infcone_1}, and letting
  $c\to0$ delivers \eqref{liminf_H}.

  The proof of \eqref{liminf_tr} proceeds similarly to that of
  \eqref{met_LIL_tr}, but with reference to (\ref{M_incl}) replaced by
  that to
  \begin{equation*}
    \widehat\sH^+_\cone\subset t^{-1/d}\sM_t\subset\widehat\sH^-_\cone
  \end{equation*}
  with
  \begin{equation*}
    \widehat\sH^\pm_\cone=(\cone\cap\sH(\pm ct^{-1/2}))\cup
    ((\Rp\setminus\cone)\cap\sH(\pm\gamma\sqrt d\fnc(t)))
  \end{equation*}
  and any $\gamma>\mu^{-3/2}$. Letting $c\to0$ completes the proof of
  \eqref{liminf_tr}. Finally, if $K\subset\Rp$ then the claim
  immediately follows from \eqref{infcone_2} by choosing $\cone\supset
  K$ and $c\to0$.
\end{proof}

\section{The one-dimensional case}
\label{sec:one-dimensional-case}

Let us now briefly discuss the case of $d=1$. Then
\begin{equation*}
    \sH(c)=[0,\infty)\cap[\mu^{-1}+c,\infty),
\end{equation*}
and there is no need to introduce the cone $\cone$.
The multidimensional inversion theorem (Theorem~\ref{inv_th}) and the
set-inclusion SLLN (Corollary~\ref{set_SLLN_cor}), together with their
proofs, remain valid in this case, too.  The set-inclusion LIL
(Theorem \ref{LIL}) in the above form additionally requires that
$\E(\xi^2\log\log|\xi|)<\infty$ (see (\ref{BWC}) above), which
in the multidimensional case follows from Wichura's
condition (\ref{Wich_cond}). Under this assumption, which goes back to
Feller, we could apply a Kolmogorov--Petrovsky--Erd\H{o}s--Feller type
criterion in order to check whether a given function is upper or lower
in the LIL for subsequences. 

However, in the one-dimensional setting, this assumption actually
affects only the behaviour at the critical values $\pm\mu^{-3/2}$.
Indeed, if $|\gamma|>\mu^{-3/2}$ (parts (i) and (iii) in Theorem
\ref{LIL}), the above proofs remain valid. In the case of
$-\mu^{-\frac32}<\gamma<\mu^{-\frac32}$, the claim can be proved in
the following alternative way which does not require (\ref{BWC}).

According to the ordinary LIL, there is a (random) sequence of indices
$\{\n_k,k\geq1\}$, such that $\n_k\to\infty$ a.s. and
\begin{equation}
  \label{to1}
  \lim_{k\to\infty}\frac{S_{\n_k}-\mu n_k}{\sigma\chi(\n_k)}=1\quad\text{a.s.}
\end{equation}
Suppose (\ref{notincl1}) does not hold, and so
$t^{-1}\sM_t\subset\sH(\gamma\sigma\fnc(t))$ for all sufficiently
large $t$.  Therefore, $S_\n\ge t$ implies that
$\n\ge\mu^{-1}t+\gamma\sigma\chi(t)$ for all sufficiently large $t$. 
Since $S_{\n_k}\to\infty$ a.s., we may let $\n=\n_k$ and $t=S_{\n_k}$,
so that $\n_k\ge\mu^{-1}S_{\n_k}+\gamma\sigma\chi(S_{\n_k})$. 
By (\ref{to1}), (\ref{p}), and making use of the SLLN for $S_{\n}$, we
arrive at the contradiction
\begin{displaymath}
  1\le-\gamma\mu\lim_{k\to\infty}\frac{\chi(S_{\n_k})}{\chi(\n_k)}=
  -\gamma\mu\lim_{k\to\infty}\sqrt{\frac{S_{\n_k}}{\n_k}}=-\gamma\mu^{\frac32}<1.
\end{displaymath}
Statement (\ref{notincl2}) may be proved in a similar way, noticing
that $S_\n<t$ implies $\n<\mu^{-1}t+\gamma\sigma\chi(t)$, and using
\begin{displaymath}
  \lim_{k\to\infty}\frac{S_{\n_k}-\mu\n_k}{\sigma\chi(\n_k)}=-1\quad\text{a.s.}
\end{displaymath}
instead of (\ref{to1}).  So, Theorem \ref{LIL} remains true in the
one-dimensional case without condition (\ref{BWC}) if
$|\gamma|\ne\mu^{-3/2}$. 

For the metric SLLN and LIL (Theorems \ref{met_SLLN_th} and
\ref{met_LIL_th}) in case $d=1$, one would rather define for $t>0$ the
first passage times
$$\nu(t)=\min\{n\ge1:\; S_n>t\}$$
and the last exit times
$$N(t)=\max\{n\ge0:\; S_n\le t\}.$$
The SLLN and LIL for $\nu(t)$ and $N(t)$ are given in
\cite[Thms.~3.4.4, 3.11.1]{GutBook}.  Note
that the right-hand sides in the cited results are actually identical
to those in (\ref{H_bound}), (\ref{D_bound}) and (\ref{met_LIL_H})
with $d=1$.

Theorem \ref{liminf_th} trivially holds in the one-dimensional case (see the argument above its statement). Theorem \ref{thr:radial} actually reduces in this case to the classical central limit theorem for renewal processes (see, e.g., \cite[Th.~2.5.2]{GutBook}).

\section{Appendix: strong limit theorems for the sectorial convergence}
\label{sec:append-strong-limit}

Fix a closed convex cone $\cone$ with $\cone\setminus\{0\}\subset\Rp$ and denote
\begin{displaymath}
  S_n(\cone)=\sum_{k\in\cone,k\leq n}\xi_k
\end{displaymath}
and 
\begin{displaymath}
  R_n(\cone)=\card\{k\in\cone\cap\NN^d:\; k\leq n\}. 
\end{displaymath}

The a.s.~limit theorems for $S_n(\cone)$ normalised by $R_n(\cone)$
were derived by Gut \cite{GutSect}. Then, lower moment assumptions on
the summands suffice if $n$ converges to infinity inside the
cone. Below we confirm that, with this mode of convergence, the strong
limit theorems hold for $S_n(\cone)$ replaced by $S_n$ and
$R_n(\cone)$ replaced by $|n|$.

\begin{theorem}
  \label{S_set_SLLN_lem}
  If $\E|\xi|^\beta<\infty$ for some $\beta\in[1,2)$, then 
  \begin{equation}
    \label{eq:1}
    S_n-\mu|n|=\ssO(|n|^{1/\beta}) \quad \text{a.s. as }\; \cone\ni n\to\infty. 
  \end{equation}
  \label{sectLILlem}
  If $\E\xi^2<\infty$, then
  \begin{equation}
    \label{mysectLIL}
    \limsup_{\cone\ni n\to\infty}
    \frac{|S_\n-\mu|\n||}
    {\sigma\chi(|n|)}=1 \quad \text{a.s.}
  \end{equation}
\end{theorem}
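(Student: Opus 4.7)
The plan is to adapt the classical Marcinkiewicz--Zygmund SLLN and Hartman--Wintner LIL to the cone setting, exploiting the geometric fact that $\cone\setminus\{0\}\subset\Rp$ forces the components of every $n\in\cone$ to be mutually comparable. Since $\cone\cap\Sphere$ is compact and disjoint from the coordinate hyperplanes, each coordinate function attains a positive minimum on it, and hence there exists $c_\cone>0$ with $\min_in^i\ge c_\cone\max_in^i$ for all nonzero $n\in\cone$, so $|n|\asymp(\max_in^i)^d$ and
\[
\card\{n\in\cone\cap\NN^d:|n|\le k\}\le C_\cone\, k.
\]
This \emph{linear} cardinality (as opposed to the $k\log^{d-1}k$ bound valid on all of $\NN^d$) is precisely what permits the $\log^{d-1}|\xi|$ factor to be dropped from the moment hypothesis relative to the unrestricted Smythe--Wichura theorems.

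For \eqref{eq:1} I would truncate at the Marcinkiewicz--Zygmund scale: set $\xi_m'=\xi_m\mathbf{1}(|\xi_m|\le|m|^{1/\beta})$. Fubini together with the cardinality bound yields
\[
\sum_{m\in\cone\cap\NN^d}\Prob{|\xi_m|>|m|^{1/\beta}}\le\E\,\card\{m\in\cone\cap\NN^d:|m|<|\xi|^\beta\}\le C_\cone\,\E|\xi|^\beta<\infty,
\]
so Borel--Cantelli eliminates the contribution of $\xi_m-\xi_m'$, and the deterministic centring $\sum_{m\le n}\E\xi_m'-\mu|n|$ is controlled by the same argument. For the centred truncated sums $\widetilde T_n=\sum_{m\le n}(\xi_m'-\E\xi_m')$, I would partition $\cone\cap\NN^d$ into dyadic shells $D_k=\{n\in\cone\cap\NN^d:2^{k-1}\le\max_in^i<2^k\}$, on which $|n|\asymp 2^{kd}$ uniformly, and bound $\Prob{\max_{n\in D_k}|\widetilde T_n|>\eps\,2^{kd/\beta}}$ via a multi-index Kolmogorov-type maximal inequality together with the truncated-variance estimate $\E(\xi_m')^2\le|m|^{(2-\beta)/\beta}\E|\xi|^\beta$. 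The resulting probabilities are summable in $k$, and Borel--Cantelli finishes the SLLN.

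For the LIL \eqref{mysectLIL}, the upper bound $\le 1$ follows on the same dyadic scheme but with truncation at the Hartman--Wintner scale $\sqrt{|m|/\log\log|m|}$ and Kolmogorov's exponential maximal inequality, again gaining summability from the linear cardinality count in $\cone$. For the matching lower bound it suffices to restrict to the diagonal subsequence $n_i=i\cdot\1$, which lies in $\cone$ and along which $S_{n_i}$ is an ordinary one-dimensional partial sum of $i^d$ i.i.d.\ copies of $\xi$; the classical Hartman--Wintner LIL then delivers $\limsup_i|S_{n_i}-\mu i^d|/(\sigma\chi(i^d))=1$ a.s., and since $|n_i|=i^d$ this is precisely the lower bound in \eqref{mysectLIL}.

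The main technical obstacle will be the multi-index maximal inequality over the non-monotone shells $D_k$, since $\widetilde T_n$ is not a martingale in any single ordering of $\cone$. The standard remedy is to write the difference $\widetilde T_n-\widetilde T_{n'}$ for $n,n'\in D_k$ as a signed sum over rectangular subblocks and then apply iterated one-dimensional Kolmogorov inequalities in each coordinate direction; the book-keeping here is the only genuinely delicate step, and the comparability $n^i\asymp|n|^{1/d}$ keeps the number of coordinate iterations at $d$ rather than at a logarithmic scale, which is what allows the final bound to match the one-dimensional rate.
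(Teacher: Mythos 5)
Your SLLN argument is sound and takes a genuinely different route from the paper's: you re-run the Marcinkiewicz--Zygmund truncation scheme directly inside the cone, whereas the paper symmetrises, applies the multidimensional L\'evy inequality on dyadic shells to dominate $\sup_{k\in A(i)}|S_k|$ by a single one-dimensional sum of $|m_\cone|2^{di}$ i.i.d.\ terms, and then recognises the resulting series as a Baum--Katz series, so that no truncation or variance estimate is needed. Both routes rest on the observation you correctly isolate (linear growth of $\card\{n\in\cone\cap\NN^d:\,|n|\le k\}$, which is where the $\log^{d-1}$ moment factor disappears), and your version should close once the iterated maximal inequality over the shells is written out.

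The LIL part has a genuine gap in the upper bound. Your shells $D_k$ are defined by $\max_i n^i$, and across one shell the product $|n|$ varies by a factor that does \emph{not} tend to one: besides the factor $2^d$ it carries the spread of $|u|$ over $u\in\cone\cap\Sphere$, which is fixed by the cone. A maximal inequality over $D_k$ compares $\sup_{n\in D_k}|S_n|$ with a single threshold, so the exponent fed into Borel--Cantelli is off by the square root of that spread, and the scheme as described yields only $\limsup\le C$ for a cone-dependent $C>1$. This is precisely what the paper's first step \eqref{BLIL} delivers (with $C=2^d|m_\cone|^{1/2}$); to reach the constant $1$ the paper needs a second, structurally different argument: enclose $\cone$ in a slightly larger cone $\econe$ with $R_n(\econe)>(1-\delta)|n|$ for $n\in\cone$, split $S_n=S_n(\econe)+S_n(\cecone)$, apply Gut's sectorial LIL (normalised by $\chi(R_n(\econe))$) to the first piece, and show the second is negligible as $\delta\to0$. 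To rescue your approach you would have to partition $\cone$ into thin subcones on which $|u|$ is nearly constant and let the radial blocking ratio tend to $1$, so that the maximal inequality loses only a $1+o(1)$ factor; that is substantial extra work you have not indicated. Two smaller points on your lower bound: the diagonal $i\cdot\1$ need not lie in $\cone$ (take integer multiples of any lattice point of $\cone$ instead; the nested rectangles still give a one-dimensional walk sampled along $m_k\sim ck^d$ with $m_{k+1}/m_k\to1$), and the assertion that the subsequence LIL along such $m_k$ has limsup exactly $1$ under $\E\xi^2<\infty$ itself needs a proof or a reference.
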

\begin{proof}
  We will partially apply the approach used in the proofs of
  Theorems~3.1 and~5.1 in \cite{GutSect}.  Fix $m_\cone\in\NN^d$ such
  that all $\x\in\cone$ with $|\x|\leq1$ satisfy $\x\leq m_\cone$,
  that is $m_\cone$ dominates all points from $\{\x\in\cone:\;
  |x|\le1\}$.  The existence of such $m_\cone$ is guaranteed by the
  fact that $\cone\setminus\{0\}$ is a subset of $\Rp$.
  
  We may clearly assume that $\mu=0$ and, in the proof of
  \eqref{mysectLIL}, that $\sigma=1$. Define
  \begin{displaymath}
    A(i)=\{n\in\NN^d\cap\cone:\;
    2^{d(i-1)}\leq |n|< 2^{di}\},\quad i\geq 1. 
  \end{displaymath}
  Then, for any $\eps>0$,
  \begin{equation}
    \label{Bor-Cant_ser}
    \sum_{i=1}^\infty \Prob{\sup_{k\in A(i)}|S_k|/|k|^{1/\beta}>\eps}
    \leq \sum_{i=1}^\infty \Prob{\sup_{k\in A(i)} |S_k|>\eps 
      2^{d(i-1)/\beta}}.
  \end{equation}
  By the multidimensional L\'evy's inequality (\cite[Th.~1]{PP} or
  \cite[Cor.~2.4]{KlesBook}), assuming that $\xi$ is symmetric, we
  have
  \begin{equation}
    \label{Levy}
    \Prob{\sup_{k\in A(i)} |S_k|>\eps 2^{d(i-1)/\beta}}
    \leq 2^d\Prob{|Y_{l_i}|>\eps 2^{d(i-1)/\beta}},
  \end{equation}
  where $Y_{l_i}$ is the sum of $l_i=|m_\cone|2^{di}$ i.i.d.~copies of $\xi$.
  Next, by the one-dimensional L\'evy's inequality,
  \begin{equation}
    \label{change_var}
    \begin{aligned}
      \Prob{|Y_{l_i}|>\eps 2^{d(i-1)/\beta}}
      &=\frac 1{l_{i+1}-l_i}\sum_{j=l_i+1}^{l_{i+1}}
      \Prob{|Y_{l_i}|>\eps 2^{d(i-1)/\beta}}\\
      &\le\frac 2{|m_\cone|2^{di}(2^d-1)}\sum_{j=l_i+1}^{l_{i+1}}
      \Prob{|Y_j|>\eps 2^{d(i-1)/\beta}}\\
      &\le\frac {2^{d+1}}{2^d-1}\sum_{j=l_i+1}^{l_{i+1}}j^{-1}
      \Prob{|Y_j|>\eps_1j^{1/\beta}}
    \end{aligned}
  \end{equation}
  with $\eps_1=4^{-d/\beta}|m_\cone|^{-1/\beta}\eps$.
  Putting all the above inequalities together and noting that
  \begin{displaymath}
  \sum_{j=1}^{\infty}j^{-1}\Prob{|Y_j|>\eps_1j^{1/\beta}}<\infty
  \end{displaymath}
  by \cite[Th.~1]{BK}, we obtain that the series on the left-hand side
  of \eqref{Bor-Cant_ser} converges for all $\eps>0$, and so the
  Borel-Cantelli lemma applies. The desymmetrisation argument is
  standard (see, e.g., the proof of \cite[Th.~3.2]{Gut}) and completes
  the proof of \eqref{eq:1}.

  Let us now turn to the proof of \eqref{mysectLIL}.
  The proof is divided into two steps. First we show that
  \begin{equation}
    \label{BLIL}
    \limsup_{\cone\ni n\to\infty}
    \frac{|S_\n|}
    {\chi(|n|)}\le C \quad \text{a.s.}
  \end{equation}
  for some $C>0$. 
  Repeating the calculations from
  \eqref{Bor-Cant_ser}--\eqref{change_var} with $\chi(|k|)$ instead of
  $|k|^{1/\beta}$ and $C$ instead of $\eps$, we arrive at the
  inequality
  \begin{displaymath}
    \sum_{i=1}^\infty \Prob{\sup_{k\in A(i)}|S_k|/\chi(|k|)>C}
    \leq \frac {2^{2d+1}}{2^d-1}\sum_{j=l_1+1}^{\infty}j^{-1}\Prob{|Y_j|>C\chi(4^{-d}
      |m_\cone|^{-1}j)}.
  \end{displaymath}
  It follows from \cite[Th.~4]{dav68} that the series on the
  right-hand side converges for all $C>2^d|m_\cone|^{1/2}$. An
  application of the Borel-Cantelli lemma and the desymmetrisation
  argument complete the proof of \eqref{BLIL}.
  
  Next, we prove that
  \begin{equation}
    \label{new_LIL}
    \limsup_{\cone\ni\n\to\infty}\frac{|S_n|}{\chi(|n|)}=1 \quad \text{a.s.}
  \end{equation}
  Fix a $\delta>0$ and consider a further closed convex cone
  $\econe\supset\cone$ such that $\econe\setminus\{0\}\subset\Rp$ and
  \begin{equation}
    \label{lcone}
    (1-\delta')|n|>R_n(\econe)>(1-\delta)|n|\quad\text{for all }\n\in\cone
  \end{equation}
  with some $\delta'\in(0,\delta)$.
  Let $\cecone=\Rp\setminus\econe$. Then
  \begin{equation}
    \label{cones_repr}
    \frac{S_n}{\chi(|n|)}
    =\left(\frac{S_n(\econe)}{\chi(R_n(\econe))}
      +\frac{S_n(\cecone)}{\chi(R_n(\cecone))}\;
      \frac{\chi(R_n(\cecone))}{\chi(R_n(\econe))}\right)
    \frac{\chi(R_n(\econe))}{\chi(|n|)}. 
  \end{equation}
  Note that
  \begin{equation}
    \label{GutSectLil}
    \limsup_{\cone\ni\n\to\infty}
    \frac{|S_n(\econe)|}{\chi(R_n(\econe))}=1
  \end{equation}
  by the sectorial law of the iterated logarithm from
  \cite[Th.~3.1]{GutSect}. Besides, (\ref{lcone}) and (\ref{p}) easily
  imply
  \begin{alignat}{2}
    \label{bounds_1}
    \frac{\chi(R_n(\econe))}{\chi(|n|)}&>\sqrt{1-\delta},&\qquad
    \frac{\chi(R_n(\cecone))}{\chi(R_n(\econe))}&<
    \frac{\sqrt{\delta}}{\sqrt{1-\delta}},\\
    \label{bounds_2}
    \frac{\chi(|\n|)}{\chi(R_n(\cecone))}&<\frac 1{\sqrt{\delta'}},&
    \frac{\chi(R_n(\econe))}{\chi(R_n(\cecone))}&<
    \frac{\sqrt{1-\delta'}}{\sqrt{\delta'}},
  \end{alignat}
  for all $\n\in\cone$ with sufficiently large $|\n|$.  Finally,
  \begin{equation}
    \label{last_bound}
    \frac{|S_n(\cecone)|}{\chi(R_n(\cecone))}\le
    \frac{|S_n|}{\chi(|\n|)}\;\frac{\chi(|\n|)}{\chi(R_n(\cecone))}+
    \frac{|S_n(\econe)|}
    {\chi(R_n(\econe))}\;\frac{\chi(R_n(\econe))}{\chi(R_n(\cecone))}.
  \end{equation}
  As shown above,
  \begin{displaymath}
    \limsup_{\cone\ni\n\to\infty}\frac{|S_n|}{\chi(|\n|)}<\infty.
  \end{displaymath}
  So, (\ref{last_bound}), (\ref{GutSectLil}), and (\ref{bounds_2}) lead to
  \begin{displaymath}
    \limsup_{\cone\ni\n\to\infty}
    \frac{|S_n(\cecone)|}{\chi(R_n(\cecone))}<\infty.
  \end{displaymath}
  Due to (\ref{cones_repr}), the latter along with (\ref{GutSectLil})
  and (\ref{bounds_1}) implies \eqref{new_LIL} since $\delta$ can be
  chosen arbitrarily small.
\end{proof}

\section*{Acknowledgements}

This work was supported by the Swiss National Science Foundation in
the framework of the SCOPES programme, Grant No. IZ73Z0\_152292. The
authors are grateful to Oleg Klesov for bringing to them the
problematic related to multiple sums.


\end{document}